\numberwithin{equation}{section}
\theoremstyle{plain}
\newtheorem{theorem}{Theorem}[section]
\newtheorem{proposition}[theorem]{Proposition}
\theoremstyle{definition}
\newtheorem{remark}[theorem]{Remark}
\newtheorem{definition}[theorem]{Definition}
 \definecolor{light-gray}{gray}{0.96}
\journal{X}
\title{Convex Domino Towers}
\author{Tricia Muldoon Brown\\
Department of Mathematics\\
Armstrong State University\\
11935 Abercorn Street\\
 Savannah, GA 31419\\
 USA\\
 patricia.brown@armstrong.edu}
\begin{document}

\begin{abstract}
We study convex domino towers using a classic dissection technique on polyominoes to find the generating function and an asymptotic approximation.

\end{abstract}

\maketitle

\section{Introduction}
If you have ever had the pleasure to play with blocks like LEGO\textsuperscript{\textregistered}s or DUPLO\textsuperscript{\textregistered}s with young children, you will see that their first instinct is to stack the blocks into towers.  This provokes the enumerative question: how many different towers can be built?  In previous work~\cite{Brown}, this author restricted to blocks of fixed size, with unit width, unit height, and a length of $k$ units, called $k$-omino blocks and enumerated $k$-omino towers in terms of hypergeometric functions.  In this manuscript, we will use the classical method of dissection to study convex domino (2-omino) towers.

A {\it domino block} is a $2$-omino block which is two units in length and has two \textit{ends}, a \textit{left end} and a \textit{right end}.  More generally, a \textit{polyomino} is a collection of unit squares with incident sides.   We note, a domino can refer to both the three-dimensional domino block and the two-dimensional domino polyomino through the correspondence of the block to the polyomino described by the boundary of the length two vertical face of the block.  Similarly, the boundary of the vertical face of a collection of stacked domino blocks may also define a fixed polyomino.  In such a collection, a domino is in the \textit{base} if no dominoes are underneath it, and the \textit{level} of a domino will be its vertical distance from the base.

Under the name of dissection problems, some of the first published results on polyominoes appeared in {\it The Fairy Chess Review} in the 1930's and 1940's and were named and popularized by Solomon Golomb~\cite{Golomb_AMM, Golomb} and Martin Gardener~\cite{Gardener}.  Researchers from various disciplines including mathematicians, chemists, and physicists have been interested in polyominoes and their applications.  Traditionally, polyomino problems have been easy to describe, but often difficult to solve.  For example, no formula is known for the number of polyominoes parametrized by area, although asymptotic bounds~\cite{Klarner_Rivest} and numerical estimates~\cite{Jensen} do exist.  This general case may be unknown, but enumerative results are available for certain classes of polyominoes using parameters such as area, perimeter, length of base or top, number of rows or columns, and others. 

From the enumerative viewpoint, here we only consider \textit{fixed polyominoes}, also called a fixed animals, which are oriented polyominoes such that different orientations of the same free shape are considered distinct.  We define a class of such fixed polyominoes called domino towers in terms of their area $2n$ and base of length $2b$ as follows:
\begin{definition} For $n\geq b\geq 1$, an \textit{(n,b)-domino tower} is a fixed polyomino created by sequentially placing $n-b$ dominoes horizontally on a convex, horizontal base composed of $b$ dominoes, such that if a non-base domino is placed in position indexed by $\{(x,y),(x+1,y)\}$, then there must be a domino in position $\{(x-1,y-1), (x,y-1)\}$, $\{(x,y-1), (x+1,y-1)\}$, or $\{(x+1,y-1), (x+2,y-1)\}$.
\end{definition}
Some enumerative results on domino towers are known.  Specifically, the number of $(n,b)$-domino towers is given by $2n-1\choose n-b$ for $n\geq b\geq 1$ and the number of $n$-domino towers is $4^{n-1}$ for $n\geq 1$, see Brown~\cite{Brown}.  Here, we wish to enumerate the class of {\it convex} domino towers.  To define convexity, we start with some vocabulary associated with polyominoes.  A \textit{column} of a polyomino is the intersection of the polyomino with an infinite vertical line of unit squares.  Similarly, a \textit{row} of a polyomino is the intersection of the polyomino with an infinite horizontal line of unit squares.  A polyomino is said to be \textit{column-convex}, respectively \textit{row-convex}, if all its columns, respectively rows, are convex.  Finally, a polyomino is \textit{convex} if it is both column- and row-convex.  Figure~\ref{convex_fig} displays examples of convex $(18,4)$-domino towers.

\begin{figure}[h]
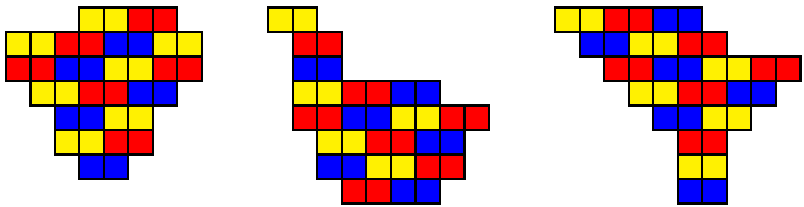

\centering
  \ytableausetup{smalltableaux}
  \begin{ytableau}
   \none &   \none   & \none &*(yellow) & *(yellow) &*(red) & *(red) \\
 *(yellow) & *(yellow)&*(red) & *(red) &*(blue) & *(blue) &*(yellow) & *(yellow)\\
   *(red) & *(red)   &*(blue) & *(blue)&*(yellow) & *(yellow)&*(red) & *(red)\\
   \none &  *(yellow) & *(yellow) &*(red) & *(red)  &*(blue) & *(blue)\\
   \none &   \none & *(blue) & *(blue)  &*(yellow) & *(yellow)\\
   \none &  \none &  *(yellow) & *(yellow)&*(red) & *(red)\\
   \none &  \none & \none & *(blue) & *(blue) 
  \end{ytableau}
\qquad
\begin{ytableau}
*(yellow) & *(yellow) \\
\none &*(red) & *(red) \\
\none &*(blue) & *(blue) \\
 \none &*(yellow) & *(yellow)&*(red)&*(red) & *(blue) & *(blue) \\
 \none &  *(red) & *(red)   &*(blue) & *(blue)&*(yellow) & *(yellow)&*(red) & *(red)\\
 \none & \none &*(yellow) & *(yellow) &*(red) & *(red)  &*(blue) & *(blue)\\
 \none & \none & *(blue) & *(blue)&*(yellow) & *(yellow) &*(red) & *(red) \\
 \none & \none &   \none & *(red) & *(red)  &*(blue) & *(blue)
  \end{ytableau}
\qquad
  \begin{ytableau}
*(yellow) & *(yellow) &*(red) & *(red) &*(blue) & *(blue)\\
\none&*(blue) & *(blue) & *(yellow) & *(yellow)&*(red) & *(red) \\
  \none &\none   & *(red) & *(red)   &*(blue) & *(blue)&*(yellow) & *(yellow)&*(red) & *(red)\\
 \none & \none & \none &  *(yellow) & *(yellow) &*(red) & *(red)  &*(blue) & *(blue)\\
 \none & \none & \none &   \none & *(blue) & *(blue)  &*(yellow) & *(yellow)\\
 \none & \none & \none &  \none &  \none & *(red) & *(red)\\
 \none & \none & \none &  \none & \none & *(yellow) & *(yellow)\\
 \none & \none & \none &  \none & \none & *(blue) & *(blue) 
  \end{ytableau}
\caption{Examples of convex $(18,4)$-domino towers}
\label{convex_fig}
\end{figure}

Historically classes of column-convex, row-convex, and convex polyominoes have been of interest.  Klarner and Rivest~\cite{Klarner_Rivest} and Bender~\cite{Bender} give asymptotic formulas for the number of convex polyominoes by area.  Klarner~\cite{Klarner} describes the generating function for the number of column-convex polyominoes by area, Delest~\cite{Delest} and Delest and Viennot~\cite{Delest_Viennot}, respectively, find generating functions for column-convex with parameters area with number of columns and for convex polyominoes with perimeter $2n$, respectively, and Domo\c{c}os~\cite{Domocos} gives a generating function for the number of column-convex polyominoes by area, number of columns, and number of pillars.  

This notion of convexity can be applied to the polyomino class of domino towers.  Following the examples of Klarner and Rivest~\cite{Klarner_Rivest}, Bender~\cite{Bender}, and Wright~\cite{Wright}, the strategy applied in this paper is to dissect a convex tower into more-easily understood distinct domino towers.   Section~\ref{convex_towers} will give the dissection of a convex domino towers into an upper and lower domino tower.  The generating function enumerating the number of convex $(n,b)$-domino towers, $C_b(z)$ is stated in terms of generating functions of the dissected  pieces.  In Section~\ref{asymptotic_behavior}, we describe the asymptotic behavior of convex $(n,b)$-domino towers with the result that the coefficients are of exponential order $2^n$, that is, 
\begin{equation*}
[z]^n C_b(z) \sim \theta_b(n) 2^n
\end{equation*}
where 
\begin{equation*}
\theta_b(n) \rightarrow (1/2)^{b-1} \prod_{k=1}^{b-1} \frac{2^k}{2^k-1} \sim 3.46(1/2)^{b-1}
\end{equation*}
as $b>>0$.  
The final section, Section~\ref{conclusion}, concludes with a few questions and remarks.

\section{Convex domino towers}\label{convex_towers}

In order to find a generating function for the number of convex domino towers, we first introduce a well-studied class of convex polyominoes called  parallelogram polyominoes.  A \textit{parallelogram polyomino} is a polyomino given by a pair of lattice paths $(\tau, \sigma)$ such that $\tau$ begins with a vertical step, $\sigma$ begins with a horizontal step, and $\tau$ and $\sigma$ intersect exactly once at their endpoints.  The set of parallelogram polyominoes enumerated by perimeter is in bijection with the set of Dyck words and hence is enumerated by Catalan numbers, see P\'olya~\cite{Polya} and Gessel and Viennot~\cite{Gessel_Viennot}.  A subset of parallelogram polyominoes will be utilized in the following dissection of an $(n,b)$-domino tower.

In contrast to the standard wasp-waist technique that dissects a polyomino at a thin place, this dissection into two domino towers will take place at the widest row of the tower, so that the tower is divided in such a way that rows of maximum length and all rows at a level higher than these are contained in one tower, while all rows of level lower than that of the lowest level of a row of maximum length are in another tower.  (Figure~\ref{break_fig} illustrates this process.)  The lower tower will be called a \textit{supporting} domino tower, and may be empty, while the upper tower will be called a \textit{domino stack} or a \textit{right-skewed} or \textit{left-skewed} domino tower.  We will find generating functions for each of these classes of towers and combine these results to count all convex domino towers.  
\begin{figure}[h]
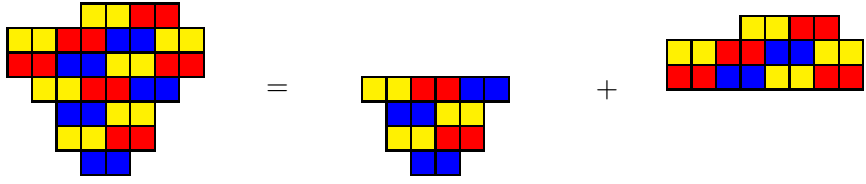

\centering
  \ytableausetup{smalltableaux}
\begin{minipage}{.2\textwidth}
  \begin{ytableau}
   \none &   \none   & \none &*(yellow) & *(yellow) &*(red) & *(red) \\
 *(yellow) & *(yellow)&*(red) & *(red) &*(blue) & *(blue) &*(yellow) & *(yellow)\\
   *(red) & *(red)   &*(blue) & *(blue)&*(yellow) & *(yellow)&*(red) & *(red)\\
   \none &  *(yellow) & *(yellow) &*(red) & *(red)  &*(blue) & *(blue)\\
   \none &   \none & *(blue) & *(blue)  &*(yellow) & *(yellow)\\
   \none &  \none &  *(yellow) & *(yellow)&*(red) & *(red)\\
   \none &  \none & \none & *(blue) & *(blue) 
  \end{ytableau}
  \end{minipage}
\begin{minipage}{.05\textwidth}
=
  \end{minipage}
\begin{minipage}{.2\textwidth}
\begin{ytableau}
\none\\
\none\\
\none\\
   \none &  *(yellow) & *(yellow) &*(red) & *(red)  &*(blue) & *(blue)\\
   \none &   \none & *(blue) & *(blue)  &*(yellow) & *(yellow)\\
   \none &  \none &  *(yellow) & *(yellow)&*(red) & *(red)\\
   \none &  \none & \none & *(blue) & *(blue) 
  \end{ytableau}
    \end{minipage}
\begin{minipage}{.05\textwidth}
+
  \end{minipage}
\begin{minipage}{.2\textwidth}
  \begin{ytableau}
     \none &   \none   & \none &*(yellow) & *(yellow) &*(red) & *(red) \\
 *(yellow) & *(yellow)&*(red) & *(red) &*(blue) & *(blue) &*(yellow) & *(yellow)\\
   *(red) & *(red)   &*(blue) & *(blue)&*(yellow) & *(yellow)&*(red) & *(red)\\
     \none\\
     \none\\
     \none
  \end{ytableau}
    \end{minipage}
\caption{Dividing a convex $(18,4)$-domino tower}
\label{break_fig}
\end{figure}

We begin by enumerating supporting domino towers created from the lower tower of a dissection.

\begin{definition}
Given a convex domino tower, identify the row of maximum length, $b$, which is at minimum level $\ell \geq 0$.  For some $n\geq 0$, the tower of $n$ blocks consisting of all dominoes on levels strictly less than $\ell$ is called a \textit{supporting $(n,b)$-domino tower}. 
\end{definition}
Figure~\ref{supporting_fig} illustrates supporting $(8,4)$-domino towers.  We note, in supporting domino towers $b$ refers to the length the base of another domino tower to be placed upon the supporting tower.  Therefore, the highest level of the supporting domino tower has length $b-1$.  Though this notion may seem counterintuitive, it will be helpful when adjoining the upper and lower domino towers.  The following proposition gives a recurrence on supporting domino towers.

\begin{figure}[h]
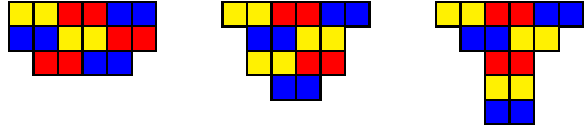

\centering
  \ytableausetup{smalltableaux}
  \begin{ytableau}
*(yellow) & *(yellow) &*(red) & *(red)  &*(blue) & *(blue)\\
 *(blue) & *(blue)&*(yellow) & *(yellow) &*(red) & *(red) \\
   \none & *(red) & *(red)  &*(blue) & *(blue)
  \end{ytableau}
\qquad
  \begin{ytableau}
  *(yellow) & *(yellow) &*(red) & *(red)  &*(blue) & *(blue)\\
   \none & *(blue) & *(blue)  &*(yellow) & *(yellow)\\
  \none &  *(yellow) & *(yellow)&*(red) & *(red)\\
  \none & \none & *(blue) & *(blue) 
  \end{ytableau}
  \qquad
    \begin{ytableau}
  *(yellow) & *(yellow) &*(red) & *(red)  &*(blue) & *(blue)\\
   \none & *(blue) & *(blue)  &*(yellow) & *(yellow)\\
  \none &  \none & *(red) & *(red)\\
  \none & \none & *(yellow) & *(yellow)\\
  \none & \none & *(blue) & *(blue) 
  \end{ytableau}
\caption{Examples of supporting $(8,4)$-domino towers}
\label{supporting_fig}
\end{figure}

\begin{proposition}\label{supporting_recurrence_prop}
Let $g(n,b)$ be the number of supporting (n,b)-domino towers.  Then
\begin{equation}
g_b(n) = g_b(n-b+1)+g_{b-1}(n-b+1)
\end{equation}
where $g_b(b-1)=1$ for $b\geq2$ and $g_b(n)=0$ for $n<1$, $b<2$, and $n<b-1$.
\end{proposition}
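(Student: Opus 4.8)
The plan is to establish a bijection between the set of supporting $(n,b)$-domino towers and a disjoint union of two families of smaller supporting towers, corresponding to the two terms in the recurrence. Recall that a supporting $(n,b)$-domino tower is a convex domino tower whose topmost row has length $b-1$ (i.e., consists of $b-1$ dominoes), has no row of length $b$ or greater, and therefore, by column- and row-convexity, has rows whose lengths are weakly increasing then weakly decreasing. The key structural observation I would isolate first is this: in a supporting $(n,b)$-domino tower, the top row has length $b-1$, and since the whole object is convex and its maximum row length is $b-1$, the second-from-top row has length either $b-1$ or $b-2$ — and in fact, because the row above it (the top row) must be supported according to the domino tower placement rule, the second-from-top row has length at least $b-2$. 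So the top row sits on a row of length $b-1$ or $b-2$.

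The main step is then a case split on the length of the second-from-top row (equivalently, on removing the top $b-1$ dominoes). If the second-from-top row also has length $b-1$, then deleting the top row leaves a convex domino tower whose top row has length $b-1$, i.e. a supporting $(n-(b-1),b)$-domino tower $= g_b(n-b+1)$; conversely any such tower admits a unique legal placement of a length-$(b-1)$ row on top that keeps convexity (the new top row, being the same length as the one below, can be horizontally aligned in a forced way once we track the skew). If the second-from-top row has length $b-2$, then deleting the top row leaves a convex domino tower with maximum row length $b-2$ and top row $b-2$, i.e. a supporting $(n-(b-1),b-1)$-domino tower $= g_{b-1}(n-b+1)$. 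The horizontal positioning — i.e. checking that in each case the deleted/added top row can be placed in exactly one way consistent with the support rule and convexity — is the part that needs care; I would argue that convexity forces the top row of length $b-1$ to overhang a row of length $b-1$ flush (zero net horizontal options after accounting for the left/right end structure) and to overhang a row of length $b-2$ with its unique convex alignment, so the map is genuinely a bijection and not merely a surjection. Summing the two cases gives $g_b(n) = g_b(n-b+1) + g_{b-1}(n-b+1)$.

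Finally I would verify the base and boundary conditions. The equality $g_b(b-1)=1$ for $b\ge 2$ holds because a supporting $(b-1,b)$-domino tower has exactly $b-1$ dominoes and top row of length $b-1$, forcing the tower to be a single row of $b-1$ dominoes, of which there is exactly one. The vanishing conditions $g_b(n)=0$ for $n<1$ or $b<2$ or $n<b-1$ are immediate from the definition: there is no room for a top row of length $b-1$ when $n<b-1$, and $b<2$ leaves a top row of nonpositive length. One should also check that the recurrence is consistent with these conventions at the edge — e.g. when $n = b-1$, the recurrence reads $g_b(b-1) = g_b(0) + g_{b-1}(0)$, and with the stated conventions ($g_{b-1}(0)=1$ when $b-1\ge 2$, i.e. the "$g_b(b-1)=1$" clause applied at index $b-1$; $g_b(0)=0$) this gives $1$, as required, with the smallest case $b=2$ handled separately since then $g_{b-1}(0)=g_1(0)=0$ and one needs $g_2(1)=1$ directly.

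The hard part will be the bijection bookkeeping in the case split: tracking the horizontal offset of each row so that "the top row of length $b-1$ can be removed and re-added in exactly one way" is genuinely true. Convexity plus the three-position support rule should pin this down, but writing it cleanly — perhaps by encoding each supporting tower as a sequence of row lengths together with left-offsets, and showing the offsets are determined — is where the real work lies.
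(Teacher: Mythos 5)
Your decomposition is exactly the paper's: strip off the top row of $b-1$ dominoes and split according to whether the row beneath it has length $b-1$ or $b-2$, with $g_b(b-1)=1$ seeding the recursion. Two remarks. First, a small bookkeeping point: the recurrence does not actually reproduce $g_b(b-1)=1$ from the stated vanishing conventions (for every $b\ge 2$ one gets $g_b(0)+g_{b-1}(0)=0$; your claim that $g_{b-1}(0)=1$ via the clause ``$g_{b'}(b'-1)=1$'' at $b'=b-1$ would require $b=2$, and even there $g_1(0)=0$), so $g_b(b-1)=1$ must simply be taken as an initial condition overriding the recurrence at $n=b-1$ --- which is how the paper uses it when it inserts the extra $x^{b-1}$ into the generating function. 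Second, and more substantively, you are right that the whole content of the coefficient $1$ on $g_b(n-b+1)$ is that the horizontal position of the added top row is forced; the paper's proof passes over this in silence, so your instinct to isolate it is sound. To close that step, note that convexity of the supporting tower \emph{by itself} does not suffice: a staircase of equal-length rows, each offset one unit from the one below, is a perfectly convex polyomino. What rules it out is the row of $b$ dominoes sitting immediately above the supporting tower in the ambient convex tower: it must overhang the length-$(b-1)$ top row by exactly one unit on each side (by the support rule), so if some lower row of the supporting tower protruded beyond the span of the row above it, the column through that protrusion would meet the length-$b$ row and the protruding row but miss the rows in between, violating column-convexity. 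Inducting downward, every row's span is nested in the span of the row directly above it; hence consecutive rows of equal length are flush and a row one domino shorter is centered, giving the unique placement in both cases and the claimed bijection.
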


\begin{proof}
Because a supporting $(n,b)$-domino tower is broken from a larger convex domino tower with a row a length $b$, the supporting tower must consist of a base of length $j$ for some $j\geq 1$ and rows of every length $i$ where $j\leq i \leq b-1$. (If a row of length $i$ is missing the tower could not be built because a row of length $i-1$ cannot hold a row of length $i+1$ or greater.)  Further the supporting tower must have a row of length $(b-1)$ at the highest level in order to support the row of length $b$ is the upper tower.  Thus, a supporting domino tower can be built recursively by adding a top row of $(b-1)$ blocks to a domino tower of $(n-b+1)$ blocks that has either top row of length $(b-1)$ or of length $(b-2)$.  These towers are counted by the functions $g_b(n-b+1)$ and $g_{b-1}(n-b+1)$.  Easily, the initial conditions of the recurrence are satisfied where $g_b(b-1)=1$ corresponds to the domino tower of height one consisting of a single horizontal row of $(b-1)$ dominoes and the result is shown.
\end{proof}

The recurrence in Proposition~\ref{supporting_recurrence_prop} can be used to find the generating function for the number of supporting domino towers of $n$ dominoes which can support a tower with base $b$.

\begin{proposition}\label{prop_supporting_genfunc}
For $b \geq 1$, the generating function for the number of supporting $(n,b)$-domino towers is
\begin{equation*}
G_b(x) = \sum_{n\geq 0}g_b(n) x^n = \sum_{i=1}^{b-1} \prod_{j=1}^{i} \frac{x^{b-j}}{1-x^{b-j}}.
\end{equation*}
\end{proposition}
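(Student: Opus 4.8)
\section*{Proof proposal}

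The plan is to turn the recurrence of Proposition~\ref{supporting_recurrence_prop} into a functional equation for $G_b(x)$ and then verify the stated closed form by induction on $b$. First, note that a supporting $(n,b)$-domino tower has a top row of exactly $b-1$ dominoes, so it consists of a single row precisely when $n=b-1$; hence the recurrence $g_b(n)=g_b(n-b+1)+g_{b-1}(n-b+1)$ is valid for every integer $n\ge b$ (with $b\ge 2$), whereas $g_b(b-1)=1$ and $g_b(n)=0$ for $n<b-1$. Separating off the term $n=b-1$ and substituting $m=n-b+1$ in the remaining sum gives
\[
G_b(x)=x^{b-1}+x^{b-1}\sum_{m\ge 1}g_b(m)x^m+x^{b-1}\sum_{m\ge 1}g_{b-1}(m)x^m .
\]
Since $g_b(0)=g_{b-1}(0)=0$, the two sums are $G_b(x)$ and $G_{b-1}(x)$, so
\[
G_b(x)=x^{b-1}+x^{b-1}G_b(x)+x^{b-1}G_{b-1}(x),\qquad\text{that is,}\qquad G_b(x)=\frac{x^{b-1}}{1-x^{b-1}}\bigl(1+G_{b-1}(x)\bigr),
\]
where $G_1(x)=0$ because $g_1(n)=0$ for all $n$ (the case $b<2$).

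Next I would establish $G_b(x)=\sum_{i=1}^{b-1}\prod_{j=1}^{i}\frac{x^{b-j}}{1-x^{b-j}}$ by induction on $b$. The base case $b=1$ holds since both sides equal $0$. For $b\ge 2$, substitute the inductive hypothesis for $G_{b-1}(x)$ into the functional equation, distribute the factor $\frac{x^{b-1}}{1-x^{b-1}}$ over the sum, and reindex each product using $\frac{x^{(b-1)-j}}{1-x^{(b-1)-j}}=\frac{x^{b-(j+1)}}{1-x^{b-(j+1)}}$: the outer factor $\frac{x^{b-1}}{1-x^{b-1}}$ supplies exactly the missing $j=1$ term of each product, so the $i$-th summand of $G_{b-1}(x)$ becomes $\prod_{j=1}^{i+1}\frac{x^{b-j}}{1-x^{b-j}}$, while the isolated summand $1$ contributes $\prod_{j=1}^{1}\frac{x^{b-j}}{1-x^{b-j}}$. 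Shifting the summation index by one then recovers $\sum_{i=1}^{b-1}\prod_{j=1}^{i}\frac{x^{b-j}}{1-x^{b-j}}$, completing the induction.

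The only delicate point is the boundary bookkeeping: one must apply the recurrence solely for $n\ge b$, reinstate the value $g_b(b-1)=1$ by hand, and use $g_b(0)=g_{b-1}(0)=0$ so that $\sum_{m\ge 1}g_b(m)x^m$ is identified with $G_b(x)$ rather than $G_b(x)-1$; mishandling any of these would corrupt a single low-order coefficient. Beyond that, the argument is a routine reindexing of products of the rational functions $\frac{x^k}{1-x^k}$, so I expect no genuine obstacle.
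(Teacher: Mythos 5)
Your proposal is correct and follows essentially the same route as the paper: both derive the functional equation $G_b(x)=\frac{x^{b-1}}{1-x^{b-1}}\bigl(1+G_{b-1}(x)\bigr)$ from the recurrence of Proposition~\ref{supporting_recurrence_prop} and then verify the closed form by induction on $b$ via the same reindexing of the products. Your treatment of the boundary terms (isolating $n=b-1$ and using $g_b(0)=0$) is in fact slightly more careful than the paper's, which absorbs the initial condition by simply appending an $x^{b-1}$ term.
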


\begin{proof}
We proceed by induction.  In the initial cases $g_1(n) =0$ for all $n$ corresponding to the empty sum, and $g_2(n)=1$, a column of $n$ dominoes, so
\begin{equation*}G_2(x) = \sum_{n\geq 0} g_2(n)x^n = x+x^2+x^3+x^4+\cdots = \frac{x}{1-x} = \sum_{i=1}^{2-1} \prod_{j=1}^i \frac{x^{2-j}}{1-x^{2-j}}.
\end{equation*}
Applying the recurrence in Proposition~\ref{supporting_recurrence_prop} for the inductive step, we have
\begin{eqnarray*}
G_b(x) &=& \sum_{n\geq 0} g_b(n)x^n = \sum_{n\geq 0} g_b(n-b+1)x^n+\sum_{n\geq 0} g_{b-1}(n-b+1)x^n\\
&=&x^{b-1} \sum_{n\geq 0}g_b(n-b+1)x^{n-b+1} +x^{b-1}\sum_{n\geq 0} g_{b-1}(n-b+1)x^{n-b+1}\\
&=&x^{b-1} G_b(x) +x^{b-1}G_{b-1}(x) + x^{b-1}\\
\end{eqnarray*}
with the $x^{b-1}$ term in the sum due to the initial condition $g_b(b-1) = 1$.  Thus,
\begin{eqnarray*}
G_b(x) &=& \frac{x^{b-1}}{1-x^{b-1}}\left(G_{b-1}(x) +1\right)\\
&=& \frac{x^{b-1}}{1-x^{b-1}}\left(\sum_{i=1}^{b-2} \prod_{j=1}^{i} \frac{x^{b-1-j}}{1-x^{b-1-j}}+1\right)\\
&=& \sum_{i=1}^{b-1} \prod_{j=2}^{i} \frac{x^{b-j}}{1-x^{b-j}} +\frac{x^{b-1}}{1-x^{b-1}}\\
&=& \sum_{i=1}^{b-1} \prod_{j=1}^{i} \frac{x^{b-j}}{1-x^{b-j}}
\end{eqnarray*}
and we have proven the claim.
\end{proof}

We note that the total number of supporting domino towers with $n$ dominoes is a given by sequence A034296~\cite{OEIS} counting the number of flat partitions of $n$.  The values $g_b(n)$, can consequently be viewed as the number of flat partitions of $n$ whose largest part is $b-1$ and are found in sequence A117468~\cite{OEIS}.  (Another version of the generating function using an infinite sum is also given in the entry.)

Having enumerated the lower towers of the dissection, we now consider the upper tower, beginning with domino stacks, which are a subset of \textit{stacks} described by Wright~\cite{Wright}. Wright found generating functions for general polyomino stacks in area partitioned by number of rows as well as in parameters area, the length of the top, and the length of the base partitioned by the number of rows.
\begin{definition}
A \textit{$(n,b)$-domino stack} is a convex domino tower of $n$ dominoes with base $b$ dominoes such that all columns of the polyomino intersect the base.  
\end{definition}
See Figure~\ref{concave_fig} for examples of some domino stacks and Table~\ref{concave_table} or sequence A275204~\cite{OEIS} for the number of $(n,b)$-domino stacks for $1\leq n\leq 10$.  As before, we begin with a recurrence.

\begin{figure}[h]
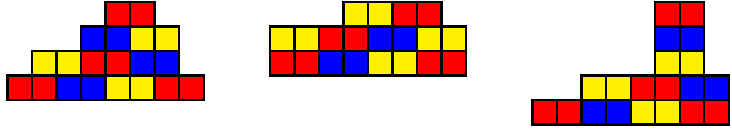

\centering
  \ytableausetup{smalltableaux}
  \begin{ytableau}
   \none & \none & \none & \none &*(red) & *(red) \\
   \none &   \none   & \none &*(blue) & *(blue) &*(yellow) & *(yellow) \\
 \none &*(yellow) & *(yellow)&*(red) & *(red) &*(blue) & *(blue) \\
   *(red) & *(red)   &*(blue) & *(blue)&*(yellow) & *(yellow)&*(red) & *(red)
  \end{ytableau}
\qquad
  \begin{ytableau}
   \none &   \none   & \none &*(yellow) & *(yellow) &*(red) & *(red) \\
 *(yellow) & *(yellow)&*(red) & *(red) &*(blue) & *(blue) &*(yellow) & *(yellow)\\
   *(red) & *(red)   &*(blue) & *(blue)&*(yellow) & *(yellow)&*(red) & *(red)
  \end{ytableau}
  \qquad
    \begin{ytableau}
  \none & \none & \none & \none & \none &*(red) & *(red) \\
       \none &       \none & \none & \none & \none &*(blue) & *(blue) \\
   \none&    \none & \none & \none & \none &*(yellow) & *(yellow) \\
   \none &   \none   &*(yellow) & *(yellow)&*(red)&*(red) & *(blue) & *(blue) \\
   *(red) & *(red)   &*(blue) & *(blue)&*(yellow) & *(yellow)&*(red) & *(red)
  \end{ytableau}
\caption{Examples of $(10,4)$-domino stacks}
\label{concave_fig}
\end{figure}

\begin{proposition}\label{concave_recurrence_prop}
Let $h_b(n)$ denote the number of $(n,b)$-domino stacks.  Then
\begin{equation*}
h_b(n) = \sum_{i=1}^b (2(b-i)+1)h_i(n-b)
\end{equation*}
where $h_b(b)=1$ and $h_b(n)=0$ if $n,b<1$ or $n<b$.
\end{proposition}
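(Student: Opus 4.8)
The plan is to set up a bijection that peels the base row off a domino stack. Concretely, for $n>b$ I will match $(n,b)$-domino stacks with triples $(i,T,s)$, where $1\le i\le b$, $T$ is an $(n-b,i)$-domino stack, and $s$ is a horizontal offset drawn from an explicit set of $2(b-i)+1$ positions. The boundary values $h_b(b)=1$ (the stack is just its base row), and $h_b(n)=0$ when $n<b$, $b<1$, or $n<1$, are immediate from the definition, so the content is the recurrence for $n>b$.

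First I would define the forward map. Given an $(n,b)$-domino stack $S$ with $n>b$, let $T$ be the sub-polyomino formed by all dominoes at level $\ge 1$, translated down one unit; since $n>b$, $T$ is nonempty, and I let $i$ be the number of dominoes in its bottom row. A horizontal slice of a convex polyomino is convex, so $T$ is convex, and the support conditions on the dominoes of $S$ at levels $\ge 2$ are exactly the domino-tower conditions for $T$ over its bottom row, so $T$ is a domino tower with base of length $i$. Because $S$ is column-convex and each of its columns meets level $0$, every column that reaches level $\ge 1$ contains level $1$; hence every column of $T$ meets the bottom row of $T$, so $T$ is an $(n-b,i)$-domino stack. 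Finally, since every column of $T$ lies over the base row of $S$ (columns $1,\dots,2b$), the $2i$ cells of $T$'s bottom row form a contiguous block inside $\{1,\dots,2b\}$, so its left edge occupies one of $2(b-i)+1$ possible columns; this is $s$, and in particular $1\le i\le b$. The offset $s$, the shape $T$, and the value $i$ are clearly recoverable from $S$.

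Next I would check reversibility. Given $i$ with $1\le i\le b$, an $(n-b,i)$-domino stack $T$, and one of the $2(b-i)+1$ admissible offsets, raise $T$ one level and place it so its base row occupies the corresponding $2i$ contiguous cells above a base row of $b$ dominoes. The result has $n$ dominoes and base $b$, is convex (every row is contiguous, and every column is an interval: level $0$ together with an interval of levels inherited from $T$), and all its columns meet the new base. The only nontrivial point is that it is a domino tower, i.e.\ that each domino of the new level-$1$ row is supported. Here the parity of domino positions enters: the base row's dominoes have left-columns $1,3,\dots,2b-1$, and for any level-$1$ domino with left-column $x$ the three candidate support positions have left-columns $\{x-1,x,x+1\}$, a window of three consecutive integers, which always contains an odd integer; admissibility of the offset keeps that odd integer in $[1,2b-1]$, so a supporting domino always exists. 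Thus all $2(b-i)+1$ placements are valid and pairwise distinct, and composing with the forward map gives the identity, establishing the bijection. Summing over $i$ yields $h_b(n)=\sum_{i=1}^b(2(b-i)+1)h_i(n-b)$ for $n>b$.

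The main obstacle is precisely this support check in the reverse direction — verifying that \emph{no} admissible horizontal shift of the upper stack ever breaks the tower condition at level $1$, which is what legitimizes the clean factor $2(b-i)+1$; everything else reduces to routine consequences of convexity and the "all columns meet the base" property.
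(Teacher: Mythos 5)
Your proof is correct and follows essentially the same decomposition as the paper: peel off the base row and place an $(n-b,i)$-domino stack on a new base of $b$ dominoes in one of $2(b-i)+1$ horizontal positions. You additionally verify the level-$1$ support condition in the reverse direction (via the parity of the three candidate support positions), a point the paper's proof leaves implicit, but the underlying argument is the same.
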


\begin{proof}
We may build a domino stack with base $b$ by placing any domino stack with base $i$ where $1\leq i \leq b$ on the base.  Place the $(n-b,i)$-domino stack in the left-most position, aligning with left edge of the new base in one way.  From here there are $b-i$ dominoes or $2(b-i)$ unit squares to the right.  The $(n-b,i)$-domino stack may also be placed in any of those positions, giving $2(b-i)+1$ positions for each $(n-b,i)$-domino stack.  The initial case, $h_b(b)$, is the single horizontal polyomino with a base of $b$ dominoes, and we have shown the recurrence.
\end{proof}

Now, the generating function can be given for $(n,b)$-domino stacks.
\begin{proposition}
For $b \geq 1$,  the generating function for the number of $(n,b)$-domino stacks is
\begin{equation*}
H_b(x) = \sum_{n\geq 0} h_b(n) x^n = \frac{x^b}{1-x^b} \sum_S \prod_{j=1}^{m} \frac{(2(k_{j+1}-k_j)+1)x^{k_j}}{1-x^{k_j}}
\end{equation*}
where the sum is over all subsets $S \subseteq \{1, 2, \cdots, b-1\}$ such that the elements of $S$ are ordered as $S = \{k_1 < k_2 < \cdots <k_m\}$ and $k_{m+1}=b$.
\end{proposition}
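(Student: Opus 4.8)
The plan is to mirror the proof of Proposition~\ref{prop_supporting_genfunc}: first convert the recurrence of Proposition~\ref{concave_recurrence_prop} into a functional equation for $H_b(x)$, and then verify by induction on $b$ that the claimed closed form solves it.

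First I would multiply the recurrence $h_b(n) = \sum_{i=1}^b (2(b-i)+1) h_i(n-b)$ by $x^n$ and sum over $n$, keeping in mind that this relation holds for $n>b$ while $h_b(b)=1$ is a separate initial condition and $h_b(n)=0$ for $n<b$. Since $h_i(0)=0$, the index shift gives $\sum_{n} h_i(n-b)x^n = x^b H_i(x)$, so the initial term contributes an isolated $x^b$ and
\begin{equation*}
H_b(x) = x^b + x^b \sum_{i=1}^b (2(b-i)+1) H_i(x).
\end{equation*}
Separating the $i=b$ summand, whose coefficient $2(b-b)+1$ equals $1$, and solving for $H_b(x)$ yields
\begin{equation*}
H_b(x) = \frac{x^b}{1-x^b}\left(1 + \sum_{i=1}^{b-1} (2(b-i)+1) H_i(x)\right),
\end{equation*}
the analogue of the equation $G_b(x) = \frac{x^{b-1}}{1-x^{b-1}}(G_{b-1}(x)+1)$ in the previous proof, but now with $b-1$ lower-order terms instead of one.

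Next I would introduce the abbreviation $\Phi_b(x) = \sum_S \prod_{j=1}^m \frac{(2(k_{j+1}-k_j)+1)x^{k_j}}{1-x^{k_j}}$ for the sum over $S \subseteq \{1,\dots,b-1\}$ in the statement (with $k_{m+1}=b$), so that the goal becomes $H_b(x) = \frac{x^b}{1-x^b}\Phi_b(x)$. The key structural identity is
\begin{equation*}
\Phi_b(x) = 1 + \sum_{i=1}^{b-1} \frac{(2(b-i)+1)x^i}{1-x^i}\, \Phi_i(x),
\end{equation*}
which I would prove by partitioning the subsets $S$ according to their largest element: the empty set contributes the empty product $1$, and a set with $\max S = i$ has the form $S'\cup\{i\}$ with $S'\subseteq\{1,\dots,i-1\}$, whose product factors as $\frac{(2(b-i)+1)x^i}{1-x^i}$ times the $\Phi_i$-summand indexed by $S'$ --- here the cap $b$ for $S$ is replaced by $i$ for $S'$, which is exactly the convention $k_{m+1}=i$ built into $\Phi_i$. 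Granting this identity, induction on $b$ finishes the argument: the base case $b=1$ reads $H_1(x)=x/(1-x)=\frac{x}{1-x}\Phi_1(x)$ since $\Phi_1(x)=1$, and in the inductive step I substitute $H_i(x)=\frac{x^i}{1-x^i}\Phi_i(x)$ for $i<b$ into the functional equation for $H_b(x)$ and recognize the right-hand side as $\frac{x^b}{1-x^b}\Phi_b(x)$ using the structural identity.

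I expect the main obstacle to be the bookkeeping in the structural identity for $\Phi_b$: checking that when the maximal element $i$ is stripped from a subset the cap $k_{m+1}$ drops from $b$ to $i$, so the leftover really is a $\Phi_i$-summand and not a shifted variant, and separately matching the empty-set term to the standalone $+1$ (equivalently, to the initial condition $h_b(b)=1$). The remaining steps are routine generating-function manipulations parallel to Proposition~\ref{prop_supporting_genfunc}.
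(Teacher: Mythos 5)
Your proposal is correct and follows essentially the same route as the paper: derive the functional equation $H_b(x) = \frac{x^b}{1-x^b}\bigl(1 + \sum_{i=1}^{b-1}(2(b-i)+1)H_i(x)\bigr)$ from the recurrence and the initial condition $h_b(b)=1$, then induct on $b$. Your ``structural identity'' for $\Phi_b$ (decomposing subsets of $\{1,\dots,b-1\}$ by their largest element, with the empty set accounting for the $+1$) is exactly the step the paper performs implicitly when it collapses the double sum into a single sum over nonempty subsets; you have merely made that bookkeeping explicit.
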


\begin{proof}
We proceed by induction.  In the initial case, by counting columns of dominoes with base of length 1, we have
\begin{equation*}H_1(x) = \sum_{n\geq 0} h_1(n)x^n = x+x^2+x^3+x^4+\cdots = \frac{x}{1-x}.
\end{equation*}
In the formula, as the only subset of the empty set is itself, we have a sum over the empty set giving an empty product and thus a sum of 1.  This is multiplied by $\frac{x}{1-x}$ and we have proven the initial case.

Next, utilizing the recurrence in Proposition~\ref{concave_recurrence_prop}, we have
\begin{eqnarray*}
H_b(x) &=& \sum_{n\geq 0} h_b(n)x^n = \sum_{n\geq 0} \sum_{i=1}^b  (2(b-i)+1)h_{i}(n-b)x^n\\
&=&x^{b} \sum_{n\geq 0}(2b-1)h_1(n-b)x^{n-b} +x^{b}\sum_{n\geq 0} (2b-3)h_{2}(n-b)x^{n-b} + \cdots + x^b\sum_{n\geq 0} h_b(n-b)\\
&=&(2b-1)x^{b} H_1(x) +(2b-3)x^{b}H_{2}(x) + \cdots + x^bH_b(x)+x^{b}\\
\end{eqnarray*}
where $x^{b}$ is given by the initial condition $h_b(b) = 1$.  Thus,
\begin{eqnarray*}
H_b(x) &=& \frac{x^{b}}{1-x^{b}}\left(\sum_{i=1}^{b-1}(2(b-i)+1)H_{i}(x) +1\right)\\
&=& \frac{x^{b}}{1-x^{b}}\left(\left( \sum_{i=1}^{b-1} \frac{(2(b-i)+1) x^i}{1-x^i}\sum_{\stackrel{S \subseteq \{1, 2, \cdots, i-1\} s.t.}{{S = \{k_1 < k_2 < \cdots <k_m\} \textit{and } k_{m+1}=i}}} \prod_{j=1}^{m} \frac{(2(k_{j+1}-k_j)+1)x^{k_j}}{1-x^{k_j}}\right) + 1\right)\\
&=& \frac{x^{b}}{1-x^{b}}\left(\sum_{\stackrel{S \subseteq \{1, 2, \cdots, b-1\}, S\not=\emptyset s.t.}{{S = \{k_1 < k_2 < \cdots <k_m\} \textit{and } k_{m+1}=b}}} \prod_{j=1}^{m} \frac{(2(k_{j+1}-k_j)+1)x^{k_j}}{1-x^{k_j}} + 1\right)\\
&=& \frac{x^{b}}{1-x^{b}}\sum_{\stackrel{S \subseteq \{1, 2, \cdots, b-1\} s.t.}{{S = \{k_1 < k_2 < \cdots <k_m\} \textit{and } k_{m+1}=b}}} \prod_{j=1}^{m} \frac{(2(k_{j+1}-k_j)+1)x^{k_j}}{1-x^{k_j}} \\
\end{eqnarray*}
and we have proven the claim.
\end{proof}

\begin{table}
\[
\begin{array}{l|rrrrrrrrrr|c}
n\backslash b& 1&2&3&4&5&6&7&8&9&10&\hbox{Total}\\
\hline
1 & 1 &0 &0&0&0&0&0&0&0&0&1\\
2&1&1&0&0&0&0&0&0&0&0&2\\
3&1&3&1&0&0&0&0&0&0&0&5\\
4&1&4&5&1&0&0&0&0&0&0&11\\
5&1&6&8&7&1&0&0&0&0&0&23\\
6&1&7&15&12&9&1&0&0&0&0&45\\
7&1&9&22&25&16&11&1&0&0&0&85\\
8&1&10&31&43&35&20&13&1&0&0&154\\
9&1&12&41&68&65&45&24&15&1&0&267\\
10&1&13&54&99&113&87&55&28&17&1&455\\
\end{array}
\]
\caption{Table enumerating $(n,b)$-domino stacks}
\label{concave_table}
\end{table}

Finally, we consider skewed $(n,b)$-domino towers.  These are subset of parallelogram polyominoes of area $2n$ which exclude rectangular polyominoes.
\begin{definition}
A \textit{right-skewed $(n,b)$-domino tower} is parallelogram $(n,b)$-domino tower such that there exists at least one column of the polyomino to the right of the base that does not intersect the base of the polyomino.  Similarly, define a \textit{left-skewed $(n,b)$-domino tower} to be a reflection of a right-skewed domino tower across a vertical axis.  
\end{definition}
Figure~\ref{skew_fig} illustrates some right- and left-skewed domino towers.  Of course, it is immediate that the number of right-skewed $(n,b)$-domino towers is equal to the number of left-skewed $(n,b)$-domino towers.  The following proposition gives the recurrence on right- or left-skewed $(n,b)$-domino towers.

\begin{figure}[h]
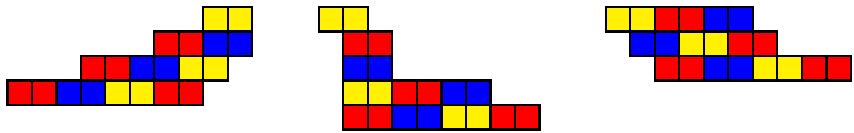

\centering
  \ytableausetup{smalltableaux}
  \begin{ytableau}
\none&\none&  \none & \none& \none & \none & \none & \none &*(yellow) & *(yellow) \\
\none&\none& \none&  \none &   \none   & \none&*(red) & *(red)  &*(blue) & *(blue)  \\
\none& \none & \none &*(red) & *(red) &*(blue) & *(blue)&*(yellow) & *(yellow) \\
   *(red) & *(red)   &*(blue) & *(blue)&*(yellow) & *(yellow)&*(red) & *(red)
  \end{ytableau}
  \qquad
    \begin{ytableau}
*(yellow) & *(yellow) \\
\none &*(red) & *(red) \\
\none &*(blue) & *(blue) \\
 \none &*(yellow) & *(yellow)&*(red)&*(red) & *(blue) & *(blue) \\
 \none &  *(red) & *(red)   &*(blue) & *(blue)&*(yellow) & *(yellow)&*(red) & *(red)
  \end{ytableau}
\qquad
  \begin{ytableau}
*(yellow) & *(yellow) &*(red) & *(red) &*(blue) & *(blue)\\
\none&*(blue) & *(blue) & *(yellow) & *(yellow)&*(red) & *(red) \\
  \none &\none   & *(red) & *(red)   &*(blue) & *(blue)&*(yellow) & *(yellow)&*(red) & *(red)
  \end{ytableau}
\caption{Examples of right-skewed or left-skewed $(10,4)$-domino towers}
\label{skew_fig}
\end{figure}

\begin{proposition}\label{skew_recurrence_prop}
Let $r_b(n)$ denote the number of right-skewed $(n,b)$-domino towers.  Then
\begin{equation}
r_b(n) = \sum_{i=1}^b \left(2 r_i(n-b)+h_i(n-b) \right)
\end{equation}
where $r_b(b+1)=1$ and $r_b(n)=0$ if $n<2$, $b<1$, or $n<b+1$.
\end{proposition}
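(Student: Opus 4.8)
The plan is to dissect a right‑skewed $(n,b)$-domino tower by peeling off its base, exactly as in the proofs for supporting towers and for domino stacks. Let $T$ be such a tower, normalised so that its base occupies cells $\{0,\dots,2b-1\}$ on level $0$, and let $T'$ be the sub‑tower of $T$ consisting of all dominoes on levels $\ge 1$; then $T'$ is again a parallelogram domino tower, built on the level‑$1$ row of $T$, a row of some length $i$.

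The first thing I would nail down is the interaction between the parallelogram hypothesis and the domino support rule across the transition from the base to level $1$. Row‑ and column‑convexity make every level a contiguous block of dominoes; the support rule forces the level‑$1$ row to lie inside the base up to a shift by at most one unit square; and the parallelogram condition (both boundary paths of $T$ weakly increasing with height) then forces $1\le i\le b$ and leaves exactly two admissible placements of the level‑$1$ row on the base: flush with the right end of the base, or overhanging it by one unit square. In particular this shows $n-b\ge 1$, and that the piece removed below leaves a genuine parallelogram domino tower $T'$ on a base of $i$ dominoes.

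Next I would case on $T'$, re‑based so that its own base occupies $\{0,\dots,2i-1\}$. If the level‑$1$ row overhangs, then $T$ is right‑skewed no matter what sits above, and $T'$ may be any right‑skewed $(n-b,i)$-domino tower or any $(n-b,i)$-domino stack, each re‑attaching in exactly one way; if the level‑$1$ row is flush, the overhang certifying right‑skewness of $T$ must appear higher up, so $T'$ must itself be a right‑skewed $(n-b,i)$-domino tower, again re‑attaching in exactly one way. Hence each right‑skewed $(n-b,i)$-domino tower is produced in two ways and each $(n-b,i)$-domino stack in one, giving $r_b(n)=\sum_{i=1}^{b}\bigl(2r_i(n-b)+h_i(n-b)\bigr)$. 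For the boundary data, the unique right‑skewed $(b+1,b)$-domino tower is the base together with a single domino half‑overhanging its right end, so $r_b(b+1)=1$, and the stated vanishing is immediate since a right‑skewed tower needs $b\ge 1$ and at least one non‑base domino to overhang.

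The step I expect to be the real obstacle is the structural analysis in the second paragraph: proving that the parallelogram constraint together with the domino support rule leaves precisely these two seatings of the level‑$1$ row and no more, and then, in the case split, checking that the overhanging and flush cases are disjoint, that no $T'$ is omitted or double‑counted between the "skewed" and "stack" alternatives, and that the re‑based $T'$ is a bona fide parallelogram domino tower of size $(n-b,i)$. Once that bookkeeping is pinned down, the identification of the counts with $r_i(n-b)$ and $h_i(n-b)$ is routine.
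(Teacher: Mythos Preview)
Your proposal is correct and follows essentially the same approach as the paper: both arguments remove (equivalently, add) the base row and classify the remaining parallelogram tower $T'$ according to whether the level-$1$ row is flush with the right end of the base or overhangs by one unit, yielding the $2r_i(n-b)+h_i(n-b)$ count. Your write-up is in fact more explicit than the paper's about the structural verification (why exactly two seatings, why the flush case forces $T'$ itself to be right-skewed), which the paper leaves implicit.
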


\begin{proof}
Given a base of $b$ dominoes, a right-skewed $(n,b)$-domino tower can be built by placing a right-skewed $(n-b,i)$-domino tower, where $i$ ranges from $1$ to $b$, on the base in two ways:  either the base of the $(n-b,i)$-domino tower is right-aligned with the new base of length $b$ or it hangs over by one unit square.  Further, a right-skewed tower could also be created by placing a $(n-b,i)$-domino stack on the new base so that it also overhangs by one unit square for any $1\leq i \leq b$.  The initial condition describes a horizontal base of size $b$ with a single domino over the rightmost block of the base.
\end{proof}

The generating function for right- or left-skewed domino towers follows in Proposition~\ref{gen_function_skew}.

\begin{proposition}
For $b\geq 1$,  the generating function the number of right-skewed $(n,b)$-domino towers is
\begin{equation*}
R_b(x) = \sum_{n\geq 0} r_b(n) x^n= \frac{x^b}{1-2x^b} \sum_{j=1}^{b} H_i(x) \left( \sum_{S\subseteq \{j,j+1, \ldots, b-1\}} \prod_{k\in S} \frac{2x^k}{1-2x^k} \right).
\end{equation*}
\label{gen_function_skew}
\end{proposition}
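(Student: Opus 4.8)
The plan is to prove the formula by induction on $b$ after rewriting the recurrence of Proposition~\ref{skew_recurrence_prop} as a functional equation. Summing $r_b(n)=\sum_{i=1}^{b}\bigl(2r_i(n-b)+h_i(n-b)\bigr)$ against $x^n$ over all $n\ge 0$ and using the vanishing conventions for small indices (unlike the two preceding propositions, the boundary value $r_b(b+1)=1$ needs no separate correction term here, being already forced by the recurrence together with $h_1(1)=1$) gives
\begin{equation*}
R_b(x)=x^{b}\sum_{i=1}^{b}\bigl(2R_i(x)+H_i(x)\bigr),
\end{equation*}
and isolating the $i=b$ summand yields
\begin{equation*}
R_b(x)=\frac{x^{b}}{1-2x^{b}}\left(2\sum_{i=1}^{b-1}R_i(x)+\sum_{i=1}^{b}H_i(x)\right).
\end{equation*}
For $b=1$ the inner sums collapse ($\sum_{i=1}^{b-1}R_i=0$ and $\sum_{i=1}^{b}H_i=H_1$), so the right-hand side is $\frac{x}{1-2x}H_1(x)$, which matches the claimed expression in which the subset sum runs over $S\subseteq\emptyset$; this is the base case.

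For the inductive step I would substitute the induction hypothesis into each $R_i$ with $i<b$. Writing $w_k:=\frac{2x^k}{1-2x^k}$ and using $\frac{x^i}{1-2x^i}=\tfrac12 w_i$, the hypothesis for $R_i$ reads $2R_i(x)=w_i\sum_{j=1}^{i}H_j(x)\sum_{S\subseteq\{j,\ldots,i-1\}}\prod_{k\in S}w_k$. The crucial point is that absorbing the factor $w_i$ into the product over $S$ is exactly the operation of adjoining $i$ to $S$ as a new, automatically maximal, element, so that
\begin{equation*}
2R_i(x)=\sum_{j=1}^{i}H_j(x)\sum_{\substack{S\subseteq\{j,\ldots,i\}\\ \max S=i}}\prod_{k\in S}w_k .
\end{equation*}
Summing over $i=1,\ldots,b-1$ and interchanging the outer sums, letting $j$ run first and then $i$ from $j$ to $b-1$, the inner double sum over $(i,S)$ becomes a sum over all \emph{nonempty} subsets $S\subseteq\{j,\ldots,b-1\}$, each counted once because $i=\max S$ is determined by $S$. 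Finally $\sum_{i=1}^{b}H_i(x)=\sum_{j=1}^{b}H_j(x)$ supplies precisely the $S=\emptyset$ term for each $j$ (and for $j=b$ this is the only term, consistent with $\{b,\ldots,b-1\}=\emptyset$), completing each subset sum; hence
\begin{equation*}
2\sum_{i=1}^{b-1}R_i(x)+\sum_{i=1}^{b}H_i(x)=\sum_{j=1}^{b}H_j(x)\sum_{S\subseteq\{j,\ldots,b-1\}}\prod_{k\in S}\frac{2x^k}{1-2x^k},
\end{equation*}
and multiplying through by $\frac{x^b}{1-2x^b}$ finishes the induction.

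The generating-function translation and the base case are routine; the substantive step is the re-indexing of the nested subset sums. I expect the main obstacle to be verifying carefully that the factor $w_i$ really implements ``adjoin a new maximal element'', that summing over $i$ produces each nonempty subset of $\{j,\ldots,b-1\}$ exactly once, and that the leftover $\sum_i H_i$ accounts for exactly the empty-subset terms (including the degenerate case $j=b$). Once this purely combinatorial identity is set up cleanly, nothing further is required.
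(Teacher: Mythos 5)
Your proposal is correct and follows essentially the same route as the paper: induction on $b$, translating the recurrence of Proposition~\ref{skew_recurrence_prop} into the functional equation $R_b=\frac{x^b}{1-2x^b}\bigl(2\sum_{i<b}R_i+\sum_{i\le b}H_i\bigr)$ and then re-indexing the nested subset sums so that the factor $\frac{2x^i}{1-2x^i}$ adjoins $i$ as a new maximal element. Your version is in fact slightly more explicit than the paper's at the two points it glosses over (the absence of a boundary correction term and the bijection producing each nonempty subset exactly once), and it obtains the base case $R_1=\frac{x}{1-2x}H_1$ from the functional equation rather than by directly counting $r_1(n)=2^{n-1}-1$ as the paper does.
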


\begin{proof}
Our initial case is $R_1(x)$.  Consider $r_1(n)$ a tower of $n$ dominoes and base $1$.  Each of the $n-1$ non-base dominoes must be placed on top of the domino previously placed domino to preserve the parallelogram shape.  Further it may be placed of the previous domino by right-alignment or a shift to the right by one unit square.  This gives $2^{n-1}$ possibilities of which we omit the rectangle tower of $n$ blocks already counted as domino stacks, so $r_1(n) = 2^{n-1}-1$ for $n\geq 1$.  Thus,
\begin{equation*}
R_1(x) = \sum_{n\geq 1 } (2^{n-1}-1)x^n = \sum_{n\geq 1 } (2x)^n - \sum_{n\geq 1} x^n = \frac{x}{1-2x} - \frac{x}{1-x} = \frac{x^2}{(1-x)(1-2x)}
\end{equation*}
which is also given by the formula,
\begin{equation*}
R_1(x) = \frac{x}{1-2x} H_1(x) \sum_{S=\emptyset} 1= \frac{x^2}{(1-x)(1-2x)}.
\end{equation*}
We induct on $b\geq 1$ and apply Proposition~\ref{skew_recurrence_prop}.
\begin{eqnarray*}
R_b(x) &=& \sum_{n\geq 0} r_b(n) x^n = \sum_{n \geq 0} \sum_{i=1}^b \left(2 r_i(n-b)+h_i(n-b) \right)\\
&=& \sum_{n \geq 0}2 r_1(n-b)x^n+\sum_{n\geq 0}h_1(n-b)x^n+ \cdots +\sum_{n \geq 0} 2 r_b(n-b)x^n+\sum_{n\geq0}h_b(n-b)x^n\\
&=&2x^b R_1(x) + 2x^b R_2(x) + \cdots + 2x^b R_b(x) + x^bH_1(x) + x^bH_2(x) + \cdots + x^bH_b(x)\\
\end{eqnarray*}
Therefore,
\begin{eqnarray*}
R_b(x)&=& \frac{x^b}{1-2x^b}\left(\sum_{i=1}^{b-1} \left(2R_i(x)+H_i(x)\right) + H_b(x)\right)\\
&=& \frac{x^b}{1-2x^b}\left( \sum_{i=1}^{b-1}  \left( \frac{2x^i}{1-2x^i} \sum_{j=1}^{i} H_j(x) \left( \sum_{S\subseteq \{j,j+1, \ldots, i-1\}} \prod_{k\in S} \frac{2x^k}{1-2x^k} \right) + H_i(x) \right) +H_b(x)\right)\\
&=&\frac{x^b}{1-2x^b} \sum_{j=1}^{b-1} H_i(x) \left( \sum_{S\subseteq \{j,j+1, \ldots, b-1\}} \prod_{k\in S} \frac{2x^k}{1-2x^k} \right) +H_b(x)\\
&=&\frac{x^b}{1-2x^b} \sum_{j=1}^{b} H_i(x) \left( \sum_{S\subseteq \{j,j+1, \ldots, b-1\}} \prod_{k\in S} \frac{2x^k}{1-2x^k} \right)
\end{eqnarray*}
and we have proven the result.
\end{proof}

Table~\ref{skewed_table} counts the number of right- or left-skewed domino towers for $1\leq n\leq 10$ which can also be found in sequence A275599~\cite{OEIS}.
\begin{table}
\[
\begin{array}{l|rrrrrrrrr|c}
n\backslash b& 1&2&3&4&5&6&7&8&9&\hbox{Total}\\
\hline
1 & 0 &0 &0&0&0&0&0&0&0&0\\
2&1&0&0&0&0&0&0&0&0&1\\
3&3&1&0&0&0&0&0&0&0&4\\
4&7&4&1&0&0&0&0&0&0&12\\
5&15&12&4&1&0&0&0&0&0&32\\
6&31&27&13&4&1&0&0&0&0&76\\
7&63&61&34&13&4&1&0&0&0&176\\
8&127&124&77&35&13&4&1&0&0&381\\
9&255&258&165&86&35&13&4&1&0&817\\
10&511&513&348&185&87&35&13&4&1&1697\\
\end{array}
\]
\caption{Table enumerating right-skewed $(n,b)$-domino towers}
\label{skewed_table}
\end{table}

We are now ready to enumerate convex $(n,b)$-domino towers, that is, convex domino towers consisting of $n$ dominoes with widest row having $b$ dominoes.  The generating function is found in Theorem~\ref{convex_theorem} and counts for the number of convex $(n,b)$-domino towers for $1\leq n \leq 10$ are shown in Table~\ref{convex_table} or in sequence A275662~\cite{OEIS}.

\begin{theorem}\label{convex_theorem}
For $b\geq 1$, the generating function for the number of convex $(n,b)$-domino towers consisting of $n$ dominoes with the row of maximum length having $b$ dominoes is
\begin{equation*}
C_b(x) = \sum_{n\geq 0} c_b(n)x^n = (G_b(x) +1) (2R_b(x)+H_b(x)).
\end{equation*}
\end{theorem}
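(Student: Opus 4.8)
The plan is to prove the identity by exhibiting an area-preserving bijection induced by the dissection described just above the theorem. Given a convex $(n,b)$-domino tower $T$, let $\ell\ge 0$ be the lowest level on which a row of maximum length $b$ occurs and cut $T$ along the horizontal line above level $\ell-1$: let $L$ be the sub-polyomino formed by the dominoes of $T$ on levels $<\ell$ and $U$ the one formed by the dominoes on levels $\ge\ell$, shifted down so that level $\ell$ becomes level $0$. The first observation is that both $L$ and $U$ are again convex domino towers, since the domino-support rule is local and restricting a row- and column-convex polyomino to a block of consecutive levels preserves row- and column-convexity; when $\ell=0$, $L$ is empty.

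The second step is to identify the two pieces. For $L$: using the fact that a row of $b$ dominoes can only sit on a row of $b-1$ or $b$ dominoes, the minimality of $\ell$ forces a nonempty $L$ to have top row of length exactly $b-1$, and convexity forces $L$ to contain a row of every length from its base length up to $b-1$, so $L$ is exactly a supporting $(m,b)$-domino tower; by Proposition~\ref{prop_supporting_genfunc}, the supporting towers together with the empty tower are enumerated by $G_b(x)+1$. For $U$: its base has length $b$ and is a widest row, and I would show that column-convexity forces exactly one of three mutually exclusive possibilities for the rows of $U$ lying above its base — every column meets the base, so $U$ is an $(m',b)$-domino stack; the rows run strictly to the right of the base, so $U$ is a right-skewed $(m',b)$-domino tower; or the rows run strictly to the left, so $U$ is left-skewed. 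Because left- and right-skewed towers are equinumerous, these upper pieces are enumerated by $H_b(x)+2R_b(x)$.

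To finish I would check reversibility: given a supporting $(m,b)$-domino tower (or the empty tower) $L$ and an $(m',b)$-domino stack or a right- or left-skewed $(m',b)$-domino tower $U$, one builds a convex $(n,b)$-domino tower with $n=m+m'$ by placing the length-$b$ base row of $U$ directly on the ground if $L$ is empty, and otherwise on the length-$(b-1)$ top row of $L$ in the one admissible position, namely the symmetric overhang by one unit on each side. Here one verifies that the junction obeys the support rule — it does, because a $b$-row has a unique admissible placement over a $(b-1)$-row — and that the glued polyomino is column-convex — it is, because the rows of $L$ widen monotonically up to length $b-1$, the base of $U$ is the global maximum $b$, and $U$'s rows never exceed $b$ and drift monotonically; finally one checks that the junction row is the lowest maximum-length row of the result, so that the dissection recovers $(L,U)$. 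Since the dissection and this construction are mutually inverse and preserve area, passing to generating functions gives $C_b(x)=(G_b(x)+1)(2R_b(x)+H_b(x))$.

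The crux is the structural dichotomy in the second step: that a convex domino tower whose base is a widest row must be a stack, or a right-skewed tower, or a left-skewed tower, with the three classes disjoint; together with the matching column-convexity check for the re-glued tower, this is where row- and column-convexity and the domino-support rule genuinely interact. Everything else is bookkeeping plus the generating-function formulas for $G_b$, $H_b$, and $R_b$ established in the earlier propositions.
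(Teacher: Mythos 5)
Your proposal is the paper's own argument, fleshed out: the paper proves this theorem in two sentences by exactly this dissection, so your additional details (the forced symmetric one-unit overhang at the junction, the reversibility and re-gluing convexity checks) are all to the good and all correct. The one genuine soft spot is in what you call the crux. You justify the trichotomy for the upper piece $U$ by saying that when $U$ protrudes to the right of its base ``the rows run strictly to the right,'' so that $U$ is a right-skewed tower in the sense of Definition 7, i.e.\ a parallelogram polyomino. That sub-claim is false: take a base of two dominoes on columns $0$ through $3$, a row of two dominoes on columns $1$ through $4$, and a single domino on columns $1$ and $2$. This is a convex $(5,2)$-domino tower whose base is the lowest widest row and which protrudes to the right (column $4$ misses the base), yet its right boundary retreats, so it is not a parallelogram polyomino and belongs to none of the three classes as literally defined. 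What saves the identity is that the recurrence of Proposition 8 --- a right-skewed tower or a \emph{stack} of base $i\le b$ set on a $b$-domino base with a one-unit right overhang --- actually enumerates the larger class of convex towers with widest base protruding only on the right (my example arises there as a $(3,2)$-stack with left-aligned top, overhanging by one unit), and it is this class, not Definition 7's, that the trichotomy needs. So your argument goes through once ``right-skewed'' is read as ``whatever $R_b$ counts''; as written, the verification you propose for the trichotomy would fail, though the defect originates in the paper's Definition 7 rather than in your plan.
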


\begin{proof}
To build a convex $(n,b)$-domino tower we need a supporting tower, which is found $G_b(x)$ ways for $b\geq 1$ along with the possibility of an empty supporting tower 1 way, as well as either a right-skewed or left-skewed domino tower or a domino stack enumerated by $2R_{b}(x)+H_b(x)$ for $b\geq 1$.
\end{proof}

We make the following observation on extensions of these results to $k$-omino towers for $k>2$.

\begin{table}
\[
\begin{array}{l|rrrrrrrrrr|r}
n\backslash b& 1&2&3&4&5&6&7&8&9&10&\hbox{Total}\\
\hline
1&1&0&0&0&0&0&0&0&0&0&1\\
2&3&1&0&0&0&0&0&0&0&0&4\\
3&7&6&1&0&0&0&0&0&0&0&14\\
4&15&18&7&1&0&0&0&0&0&0&41\\
5&31&48&17&9&1&0&0&0&0&0&106\\
6&63&109&49&20&11&1&0&0&0&0&253\\
7&127&240&115&52&24&13&1&0&0&0&541\\
8&255&498&258&122&61&28&15&1&0&0&1234\\
9&511&1026&551&261&136&71&32&17&1&0&2598\\
10&1023&2065&1163&531&298&157&81&36&19&1&5340
\end{array}
\]
\caption{Table of values for $c_b(n)$, the number of convex $(n,b)$-domino towers}
\label{convex_table}
\end{table}

\begin{remark}
The recurrences in Propositions~\ref{supporting_recurrence_prop}, \ref{concave_recurrence_prop}, and \ref{skew_recurrence_prop}, respectively, have generalizations to convex $k$-omino towers, where
\begin{eqnarray*}
g_{b,k}(n)&=&g_{b,k}(n-b+1)+ (k-1)g_{b-1,k}(n-b+1),\\
h_{b,k}(n)&=&\sum_{i=1}^b (k(b-i)+1)h_{i,k}(n-b), \hbox{ and} \\
r_{b,k}(n)&=&\sum_{i=1}^b (k r_{i,k}(n-b) + (k-1)h_{i,k}(n-b),\\
\end{eqnarray*}
respectively.  Thus, results on generating functions for $k>2$ may be obtained in the general case in a similar fashion, but are omitted here.
\end{remark}

In the final section, we describe the asymptotic behavior of generating function $C_b(x)$ to give an approximation of the coefficients, that is, the number of convex domino towers on $n$ dominoes.

\section{Asymptotic behavior of convex domino towers}\label{asymptotic_behavior}
As a product of linearly recursive functions, the function $C_b(z)$ is a rational generating function of the form $C_b(z)=f(z)/g(z)$ for some polynomials $f$ and $g$, and as such, when viewed as an analytic function in the complex plane, its asymptotic approximations can be described in terms of poles with smallest modulus.  In particular, we have the following proposition.

\begin{proposition}
The coefficients of $C_{b}(z)$, the generating function on the number of convex $(n,b)$-domino towers, are of exponential order $2^n$, that is,
\begin{equation*}
[z]^n C_b(z) \sim \theta_b(n) 2^n
\end{equation*}
where $\theta_b(n)$ is constant for each $b>1$.
\end{proposition}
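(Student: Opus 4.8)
The plan is to obtain the asymptotics directly from the dominant singularity of the rational function $C_b(z)=(G_b(z)+1)(2R_b(z)+H_b(z))$ of Theorem~\ref{convex_theorem}, via the standard meromorphic-coefficient principle: if a rational function has a pole of smallest modulus $\rho$ that is unique and simple, then its $n$-th coefficient is asymptotic to $c\,\rho^{-n}$ for a constant $c$ (namely $\rho$ times the negative of the residue there). Thus the statement reduces to (i) identifying the pole of smallest modulus of $C_b$ as $z=\tfrac12$, and (ii) checking that it is simple with nonzero residue.

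For (i) I would list the poles of each of the three factors from their closed forms. Every denominator occurring in $G_b(z)$ (Proposition~\ref{prop_supporting_genfunc}) and in $H_b(z)$ (the closed form for domino stacks) has the shape $1-z^{m}$ with $1\le m\le b$, whose zeros all lie on $|z|=1$; hence $G_b$ and $H_b$ are analytic on $|z|<1$. In $R_b(z)$ (Proposition~\ref{gen_function_skew}) the denominator factors are $1-2z^{b}$, the factors $1-2z^{k}$ with $1\le k\le b-1$ coming from the inner products, and the poles inherited from the various $H_j$. The zeros of $1-2z^{k}$ have modulus $2^{-1/k}$, which increases with $k$, so over $k\ge1$ it is minimized at $k=1$, giving $z=\tfrac12$; for every $b\ge2$ one also has $2^{-1/b}>\tfrac12$, and the inherited $H_j$-poles have modulus $1$. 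Hence $z=\tfrac12$ is the unique pole of $R_b$, and therefore of $2R_b+H_b$ and of $C_b$, of smallest modulus, and the next singularity of $C_b$ has modulus at least $2^{-1/2}$.

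For (ii), simplicity is visible in the formula for $R_b$: the only factor singular at $z=\tfrac12$ is $\frac{2z}{1-2z}$, and it occurs to the first power (inside $\prod_{k\in S}$ each index appears once, and the index $1$ only in the summand $j=1$; the prefactor $\frac{z^b}{1-2z^b}$ is regular at $\tfrac12$ for $b\ge2$). Splitting the inner sum over $S\subseteq\{1,\dots,b-1\}$ according to whether $1\in S$ isolates the singular part, and a short computation yields
\[
\lim_{z\to 1/2}(1-2z)\,R_b(z)=\frac{(1/2)^b}{1-2^{\,1-b}}\;H_1(\tfrac12)\prod_{k=2}^{b-1}\Bigl(1+\frac{2^{\,1-k}}{1-2^{\,1-k}}\Bigr)=\frac{1}{2^{b}}\prod_{k=1}^{b-1}\frac{2^{k}}{2^{k}-1}>0.
\]
Since $G_b(z)+1$ is analytic at $z=\tfrac12$ (all its poles lie on $|z|=1$) and $G_b(\tfrac12)+1>0$ (the coefficients of $G_b$ are nonnegative), we obtain $C_b(z)=\dfrac{\alpha_b}{1-2z}+E_b(z)$ with $E_b$ analytic on $|z|<2^{-1/2}$ and
\[
\alpha_b=2\bigl(G_b(\tfrac12)+1\bigr)\lim_{z\to 1/2}(1-2z)R_b(z)=\frac{G_b(\tfrac12)+1}{2^{b-1}}\prod_{k=1}^{b-1}\frac{2^{k}}{2^{k}-1}>0.
\]
Extracting coefficients gives $[z^n]C_b(z)=\alpha_b 2^{n}+[z^n]E_b(z)$ with $[z^n]E_b(z)=O(\gamma^{n})$ for some $\gamma<2$, so $[z^n]C_b(z)\sim \alpha_b 2^{n}$ and $\theta_b(n)=\alpha_b$ is the claimed $n$-independent constant; this is consistent with the limit advertised in the introduction, since $G_b(\tfrac12)\to0$ as $b\to\infty$.

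I expect the bulk of the work to be the bookkeeping in step (i): one must be sure that none of the denominator factors distributed across $G_b$, $H_b$, and $R_b$ yields a pole of modulus $\le\tfrac12$ other than $1-2z$ at $z=\tfrac12$. The delicate point is the factor $1-2z^b$ in $R_b$, whose zeros have modulus $2^{-1/b}$: this equals $\tfrac12$ precisely in the excluded case $b=1$ (where it merely coincides with the pole at $\tfrac12$) and is strictly larger for $b\ge2$. Verifying that the residue is genuinely nonzero is the other place care is needed, but there the positivity of all the series involved makes the argument immediate.
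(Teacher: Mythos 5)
Your proposal is correct and follows essentially the same route as the paper: view $C_b$ as a rational function, locate the unique dominant pole at $z=1/2$ (of multiplicity one) among the zeros of the factors $1-z^m$ and $1-2z^k$, and apply the standard meromorphic coefficient asymptotics. You are somewhat more careful than the paper's proof in explicitly checking that the residue at $z=1/2$ is nonzero (via positivity of the coefficients), a point the paper leaves implicit until its later computation of $\theta_b(n)$, with which your value of $\alpha_b$ agrees.
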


\begin{proof}
The function $C_b(z)$ is given by a product of functions.  The first factor is a sum of the functions $R_b(z)$ and $H_b(z)$.  Summands in the function $R_b(z)$ have denominators that are products on $(1-2z^i)$ and summands in the function $H_b(z)$ have denominators which are products on $(1-z^i)$ where $i$ takes on values from $1$ to $b$.  The function $G_b(z)$ also has a denominator given by products of $(1-z^i)$, in this case, where $1\leq i \leq b-1$  Thus, the denominator of $C_b(z)$ is given by 
\begin{equation*}
g_b(z) = (1-z^b)(1-2z^b)\prod_{k=1}^{b-1} (1-z^k)^2(1-2z^k),
\end{equation*}
and therefore the unique pole of smallest modulus is $1/2$ with multiplicity $1$.  
\end{proof}

Next, we wish to describe the sub-exponential factor $\theta_b(n)$.  
\begin{theorem}\label{subex_theorem}
If $C_b(x)$ is the generating function for the number of convex domino towers, the sub-exponential factor $\theta_b(n)$ in the approximation $[z]^n C_b(z) \sim \theta_b(n) 2^n$ approaches 
\begin{equation*}
\theta_b(n) \sim (1/2)^{b-1} \prod_{k=1}^{b-1} \frac{2^k}{2^k-1}
\end{equation*}
as $b>>0$, and consequently may be approximated by $3.46 (1/2)^{b-1}$.
\end{theorem}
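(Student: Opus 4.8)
The plan is to extract the sub-exponential factor $\theta_b(n)$ from the rational function $C_b(z)$ by the standard residue-at-the-dominant-pole method, and then to analyze the resulting product as $b\to\infty$. By the preceding proposition, $C_b(z)=f_b(z)/g_b(z)$ with
\begin{equation*}
g_b(z) = (1-z^b)(1-2z^b)\prod_{k=1}^{b-1}(1-z^k)^2(1-2z^k),
\end{equation*}
and the unique pole of smallest modulus is a simple pole at $z=1/2$ (coming from the single factor $1-2z^b$, since all other factors $1-2z^k$ with $k<b$ vanish only at $2^{-1/k}>1/2$, and the factors $1-z^k$ vanish at roots of unity). Hence the meromorphic expansion gives
\begin{equation*}
[z]^n C_b(z) \sim \theta_b(n)\,2^n, \qquad \theta_b(n) = -\,\frac{\operatorname{Res}_{z=1/2} C_b(z)}{(1/2)^{n+1}}\cdot(1/2)^n = \frac{2\,\tilde f_b(1/2)}{\tilde g_b(1/2)},
\end{equation*}
where $\tilde g_b(z)=g_b(z)/(1-2z^b)$ and $\tilde f_b$ is $f_b$ with any common factors removed; concretely $\theta_b(n)$ equals $-2\,\lim_{z\to 1/2}(z-1/2)C_b(z)$, a constant in $n$ as claimed. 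The first real task is to compute this limit cleanly in terms of the factored forms of $G_b$, $H_b$, $R_b$ given in the earlier propositions.

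First I would isolate which factor of $C_b(z)=(G_b(z)+1)(2R_b(z)+H_b(z))$ actually contributes the pole at $1/2$. Since $G_b(z)+1$ is a sum of products of $z^{b-j}/(1-z^{b-j})$, it is analytic at $z=1/2$, so $G_b(1/2)+1$ is just a finite constant. The pole comes entirely from $2R_b(z)$, whose closed form has the prefactor $\frac{x^b}{1-2x^b}$ and inner sums $\sum_{S\subseteq\{j,\dots,b-1\}}\prod_{k\in S}\frac{2x^k}{1-2x^k}$; the $H_i(z)$ appearing there are analytic at $1/2$. So
\begin{equation*}
\operatorname{Res}_{z=1/2}C_b(z) = (G_b(1/2)+1)\cdot 2\cdot \operatorname{Res}_{z=1/2}R_b(z),
\end{equation*}
and $\operatorname{Res}_{z=1/2}R_b(z)$ is computed by differentiating $1-2z^b$ at $z=1/2$: its derivative is $-2b z^{b-1}|_{z=1/2} = -2b(1/2)^{b-1} = -b\,2^{2-b}$. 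The inner sum $\sum_S\prod_{k\in S}\frac{2z^k}{1-2z^k}$ telescopes to $\prod_{k=j}^{b-1}\left(1+\frac{2z^k}{1-2z^k}\right)=\prod_{k=j}^{b-1}\frac{1}{1-2z^k}$, which is a clean product to evaluate at $z=1/2$. Assembling these pieces yields an explicit finite formula for $\theta_b(n)$ as a constant times $\prod_{k=1}^{b-1}\frac{1}{1-2(1/2)^k}=\prod_{k=1}^{b-1}\frac{2^{k-1}}{2^{k-1}-1/2}$-type products, which I would massage into the form $(1/2)^{b-1}\prod_{k=1}^{b-1}\frac{2^k}{2^k-1}$ up to lower-order corrections.

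The main obstacle is controlling the error between the exact $\theta_b(n)$ and the claimed asymptotic $(1/2)^{b-1}\prod_{k=1}^{b-1}\frac{2^k}{2^k-1}$ as $b\to\infty$: the exact residue will carry extra factors involving $G_b(1/2)+1$, the $H_i(1/2)$, and the $j$-dependent telescoped products summed over $j=1,\dots,b$, and I must show the dominant contribution is the $j=b$ (or $j=b-1$) term with everything else relatively negligible. I would handle this by noting that the products $\prod_{k=j}^{b-1}\frac{1}{1-2(1/2)^k}$ grow as $j$ decreases but the attached factor $H_j(1/2)$ and the geometric weights decay fast enough; a uniform bound plus a term-by-term limit should show $\theta_b(n) = (1/2)^{b-1}\prod_{k=1}^{b-1}\frac{2^k}{2^k-1}(1+o(1))$. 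Finally, the numerical constant: $\prod_{k=1}^{\infty}\frac{2^k}{2^k-1} = \prod_{k=1}^{\infty}\frac{1}{1-2^{-k}}=1/\phi(1/2)$ where $\phi$ is the Euler function, and this infinite product evaluates to approximately $3.4627$, giving the stated $\theta_b(n)\sim 3.46\,(1/2)^{b-1}$; I would just cite the standard value of $\prod(1-2^{-k})\approx 0.2888$.
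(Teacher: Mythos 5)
Your overall strategy---extract $\theta_b(n)$ as $-2\lim_{z\to 1/2}(z-1/2)C_b(z)$, note that $G_b(z)+1$ and the $H_i(z)$ are analytic at $z=1/2$ so the pole comes only from $R_b(z)$, then let $b\to\infty$---is the same as the paper's, and your telescoping identity $\sum_{S\subseteq\{j,\dots,b-1\}}\prod_{k\in S}\frac{2z^k}{1-2z^k}=\prod_{k=j}^{b-1}\frac{1}{1-2z^k}$ is a genuinely nice simplification that the paper does not exploit. However, there is a concrete error that derails the computation: you locate the dominant pole in the wrong factor. The factor $1-2z^b$ vanishes at $z=2^{-1/b}$, which for $b\geq 2$ is \emph{larger} than $1/2$ (it tends to $1$ as $b$ grows); the factor of $g_b(z)$ that actually vanishes at $z=1/2$ is $1-2z$, i.e.\ the $k=1$ term of $\prod_{k=1}^{b-1}(1-2z^k)$. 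Your parenthetical has this exactly backwards, and your residue computation ``differentiate $1-2z^b$ at $z=1/2$, getting $-2b(1/2)^{b-1}$'' is therefore evaluating the derivative of a factor that is nonzero at the pole; it would introduce a spurious factor of $b$ and wrong powers of $2$. The correct bookkeeping (as in the paper's Proposition~\ref{numerator_left_prop}) is that within $R_b(z)$ the $(1-2z)$ denominator occurs only in the $j=1$ summand with $1\in S$, so only those terms survive in the numerator at $z=1/2$, and the residue is obtained from $\frac{d}{dz}(1-2z)=-2$.

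A secondary gap: your plan for the $b\to\infty$ step (``a uniform bound plus a term-by-term limit should show\dots'') is left as a hope. The paper instead derives the exact closed form
\begin{equation*}
\theta_b(n)=(1/2)^{b-1}\prod_{k=1}^{b-1}\frac{2^k}{2^k-1}\cdot\Bigl(1+\sum_{i=0}^{b-2}\frac{1}{\prod_{k=i+1}^{b-1}(2^k-1)}\Bigr),
\end{equation*}
from which the theorem is immediate because the bracketed correction visibly tends to $1$ (each term of the sum is at most $1/(2^{b-1}-1)$ times a bounded quantity). If you fix the pole identification, your telescoped product $\prod_{k=1}^{b-1}\frac{1}{1-2z^k}$ evaluated near $z=1/2$ does lead to the factor $\prod_{k=1}^{b-1}\frac{2^k}{2^k-1}$ and the correct constant $1/\prod_{k\geq 1}(1-2^{-k})\approx 3.4627$, so the endgame of your argument is sound; it is the residue step that must be repaired.
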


In order prove Theorem~\ref{subex_theorem}, we use the following result from analytic combinatorial analysis, see Flajolet and Sedgewick~\cite{Flajolet_Sedgewick}, for example, that given exponential order $2^n$ the sub-exponential factor is as follows:
\begin{equation*}
\theta_b(n) = \frac{(-2) f(1/2)}{g^{(1)}(1/2)}.
\end{equation*}

First, we have the following proposition.

\begin{proposition}\label{derivative_half_prop}
The derivative $g_b'(1/2)$ is
\begin{equation*}
g_b'(1/2) = (-2)\left(\frac{2^b-1}{2^b}\right) \left( \prod_{k=1}^{b-1} \left(\frac{2^k -1}{2^k}\right)^3 \right)
\end{equation*}
\end{proposition}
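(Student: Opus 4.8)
The plan is to exploit the explicit factorization of $g_b(z)$ into irreducible factors and isolate the unique factor responsible for the pole at $z=1/2$. Among the factors of
\begin{equation*}
g_b(z) = (1-z^b)(1-2z^b)\prod_{k=1}^{b-1} (1-z^k)^2(1-2z^k),
\end{equation*}
exactly one vanishes at $z=1/2$, namely the linear factor $(1-2z)$; it occurs as the $k=1$ term $(1-2z^k)$ when $b\ge 2$ and as $(1-2z^b)$ when $b=1$. (That this is the only vanishing factor is precisely the statement, proved just above, that $1/2$ is a simple pole of $C_b$.) So I would write $g_b(z)=(1-2z)\,Q_b(z)$, where $Q_b$ is the product of all the remaining factors, a polynomial with $Q_b(1/2)\neq 0$. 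Differentiating, $g_b'(z) = -2\,Q_b(z) + (1-2z)\,Q_b'(z)$, and since $Q_b'$ is a polynomial the second term vanishes at $z=1/2$; hence $g_b'(1/2) = -2\,Q_b(1/2)$, and the problem reduces to evaluating $Q_b$ at $1/2$.

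The evaluation is a routine regrouping. Using the identities $1-(1/2)^k = (2^k-1)/2^k$ and $1-2(1/2)^k = 1-(1/2)^{k-1} = (2^{k-1}-1)/2^{k-1}$, each surviving factor at $z=1/2$ becomes a ratio of the required shape. I would then collect three groups: the squared factors $(1-z^k)^2$ for $1\le k\le b-1$ (with $k=1$ contributing $\left((2^1-1)/2^1\right)^2$, since the linear factor $(1-2z)$ has been stripped off) give $\prod_{k=1}^{b-1}\left((2^k-1)/2^k\right)^2$; the factors $(1-2z^b)$ and $(1-2z^k)$ for $2\le k\le b-1$, after the reindexing $j=k-1$, give $\prod_{j=1}^{b-1}(2^j-1)/2^j$; and the remaining factor $(1-z^b)$ contributes $(2^b-1)/2^b$. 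Multiplying these yields $Q_b(1/2) = \left((2^b-1)/2^b\right)\prod_{k=1}^{b-1}\left((2^k-1)/2^k\right)^3$, so $g_b'(1/2) = -2\,Q_b(1/2)$ is exactly the claimed expression. As a sanity check, for $b=1$ one has $g_1(z)=(1-z)(1-2z)$ and $g_1'(1/2) = -1$, which agrees with the formula interpreting the empty product as $1$.

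The only delicate point — and the nearest thing to an obstacle — is the bookkeeping: correctly identifying which single factor vanishes (handling $b=1$ separately from $b\ge 2$) and lining up the reindexing of the $(1-2z^k)$ factors so that all three groups collapse cleanly into products over $1\le k\le b-1$. Once that is organized, the argument is just one application of the product rule followed by substitution.
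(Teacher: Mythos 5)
Your proposal is correct and follows essentially the same route as the paper: strip off the unique vanishing factor $(1-2z)$, observe via the product rule that only the term $-2\,Q_b(1/2)$ survives at $z=1/2$, and then evaluate the remaining factors using $1-2(1/2)^k=(2^{k-1}-1)/2^{k-1}$ and regroup into the cubed product. The only additions beyond the paper's argument are the explicit treatment of the $b=1$ case and the sanity check, both of which are fine.
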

\begin{proof}
We apply the product rule to the factors $(1-z^b)(1-z)^2\prod_{k=2}^{b-1} (1-z^k)^2(1-2z^k)$ and $(1-2z)$ before substituting $z=1/2$.
\begin{eqnarray*}
g_b'(z) &=& (1-2z) \frac{d}{dz} \left((1-z^b)(1-2z^b)(1-z)^2\prod_{k=2}^{b-1} (1-z^k)^2(1-2z^k)\right) + \\
&&\left(\frac{d}{dz} (1-2x)\right) \left((1-z^b)(1-2z^b)(1-z)^2\prod_{k=2}^{b-1} (1-z^k)^2(1-2z^k)\right)\\
g_b'(1/2) &=& 0 +(-2)(1-(1/2)^b) (1-(1/2)^{b-1})(1-(1/2))^2\prod_{k=2}^{b-1} (1-(1/2)^k)^2(1-(1/2)^{k-1}) \\
&=& (-2)\left(\frac{2^b-1}{2^b}\right) \left(\frac{2^{b-1}-1}{2^{b-1}}\right)\left(\frac{1}{2}\right)^2 \prod_{k=2}^{b-1}\left(\frac{2^k-1}{2^k}\right)^2\left(\frac{2^{k-1}-1}{2^{k-1}}\right)\\
&=& (-2)\left(\frac{2^b-1}{2^b}\right) \left( \prod_{k=1}^{b-1} \left(\frac{2^k -1}{2^k}\right)^3 \right)
\end{eqnarray*}
\end{proof}

The numerator $f_b(z)$ of $C_b(z)$ is more complicated to work with.  Let us first try to determine the numerator of rational function given by the first factor of $C_b$, that is, the numerator of the rational function $2R_b(z) + H_b(z)$.

\begin{proposition}\label{numerator_left_prop}
The numerator $\hat{f}(1/2)$ of the rational function $\hat{f}(z)/\hat{g}(z)=2R_b(z) + H_b(z)$ is given by 
\begin{equation*}
\hat{f}(1/2) =(1/2)^b\cdot \prod_{k=2}^b \frac{2^k-1}{2^k}
\end{equation*}
for $b\geq 1$.
\end{proposition}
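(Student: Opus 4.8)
The plan is to read off $\hat f(1/2)$ from a residue computation at the dominant singularity $z=1/2$, which is precisely the quantity needed for Theorem~\ref{subex_theorem}. First I would fix the denominator: inspecting the formulas of Propositions above, every summand of $H_b(z)$ has denominator dividing $(1-z^b)\prod_{k=1}^{b-1}(1-z^k)$ and every summand of $R_b(z)$ has denominator dividing $(1-z^b)(1-2z^b)\prod_{k=1}^{b-1}(1-z^k)(1-2z^k)$, so we may take
\[
\hat g(z)=(1-z^b)(1-2z^b)\prod_{k=1}^{b-1}(1-z^k)(1-2z^k).
\]
This is consistent with the denominator $g_b(z)$ of $C_b(z)=(G_b(z)+1)(2R_b(z)+H_b(z))$ computed earlier, since $G_b(z)+1$ contributes exactly the extra factor $\prod_{k=1}^{b-1}(1-z^k)$.

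Next I would observe that the only factor of $\hat g$ vanishing at $z=1/2$ is the $k=1$ factor $(1-2z)$ of $\prod_{k=1}^{b-1}(1-2z^k)$: the factors $1-z^k$ vanish only on the unit circle, $1-2z^k$ for $k\ge 2$ vanishes at $z=2^{-1/k}>1/2$, and $1-2z^b$ at $z=2^{-1/b}\neq 1/2$ for $b\ge 2$; the case $b=1$ is already settled by the identity $2R_1(z)+H_1(z)=z/((1-z)(1-2z))$ in the proof of Proposition~\ref{gen_function_skew}. Writing $\hat g(z)=(1-2z)\tilde g(z)$ with $\tilde g(1/2)\neq 0$, we have $\hat f(z)/\tilde g(z)=(1-2z)\bigl(2R_b(z)+H_b(z)\bigr)$, hence
\[
\hat f(1/2)=\tilde g(1/2)\cdot\lim_{z\to 1/2}(1-2z)\bigl(2R_b(z)+H_b(z)\bigr).
\]

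To evaluate the limit, note that $H_b(z)$ is analytic at $z=1/2$, so $(1-2z)H_b(z)\to 0$, and in the double sum defining $2R_b(z)$ the factor $x^b/(1-2x^b)$ and each $H_j(x)$ are finite at $x=1/2$, so a pole can only arise from a factor $\tfrac{2x^k}{1-2x^k}$ with $k=1$; this forces $j=1$ and $1\in S$. Peeling off that one factor and using $(1-2z)\cdot\tfrac{2z}{1-2z}=2z\to 1$, together with $H_1(1/2)=1$, $\tfrac{2(1/2)^b}{1-2(1/2)^b}=\tfrac{1}{2^{b-1}-1}$, and $\tfrac{2(1/2)^k}{1-2(1/2)^k}=\tfrac{1}{2^{k-1}-1}$, the limit becomes $\tfrac{1}{2^{b-1}-1}\prod_{k=2}^{b-1}\tfrac{2^{k-1}}{2^{k-1}-1}$. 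Finally I would substitute $\tilde g(1/2)=(1-(1/2)^b)(1-2(1/2)^b)(1-\tfrac12)\prod_{k=2}^{b-1}(1-(1/2)^k)(1-2(1/2)^k)$ and simplify: the product $\prod_{k=2}^{b-1}(1-2(1/2)^k)=\prod_{k=2}^{b-1}\tfrac{2^{k-1}-1}{2^{k-1}}$ cancels $\prod_{k=2}^{b-1}\tfrac{2^{k-1}}{2^{k-1}-1}$ completely, and $\tfrac{1-2(1/2)^b}{2^{b-1}-1}=2^{1-b}$, leaving $\hat f(1/2)=2^{-b}\prod_{k=2}^{b}(1-(1/2)^k)=(1/2)^b\prod_{k=2}^{b}\tfrac{2^k-1}{2^k}$, as claimed.

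The main obstacle is the middle step: correctly isolating, inside the nested subset-sum defining $R_b$, exactly which terms contribute a pole at $z=1/2$ (namely $j=1$ together with $1\in S$), and confirming the pole is simple — equivalently that $1-2z$ divides $\hat g$ to the first power — so that the limit identity for $\hat f(1/2)$ is legitimate. Once that is in place the remainder is routine bookkeeping with geometric factors, and one can cross-check the answer numerically against small cases ($b=1,2,3$ give $1/2,\ 3/16,\ 21/256$).
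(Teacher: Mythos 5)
Your proof is correct and follows the same essential strategy as the paper's: both fix the common denominator $\hat g(z)=\prod_{i=1}^b(1-z^i)(1-2z^i)$, observe that the only contribution to $\hat f(1/2)$ comes from the $j=1$, $1\in S$ terms of $2R_b(z)$ (every other summand acquires a factor of $(1-2z)$ in its numerator over the common denominator), and then evaluate at $z=1/2$; your residue/limit packaging of this is just a tidier way of writing the paper's explicit numerator computation. Your endgame is genuinely cleaner, though: you collapse the subset sum with the elementary identity $\sum_{S}\prod_{k\in S}a_k=\prod_k(1+a_k)$, whereas the paper reaches the same cancellation via the identity $\sum_{S\subseteq\{1,\dots,b-2\}}\prod_{k\in S}(2^k-1)=2^{\binom{b-1}{2}}$ justified by a set-partition argument, so your route avoids that detour entirely (and your separate treatment of $b=1$, where the pole instead comes from the prefactor $z^b/(1-2z^b)$, correctly patches the one case the general argument misses).
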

\begin{proof}
Again we shall simplify the calculations by looking for summands in $\hat{f}(z)$ which contain the factor $(1-2z)$ as these will go to zero when $z=1/2$ is substituted.  First consider the function $2R_b(z) + H_b(z)$ as given below.
\begin{equation*}
\frac{2z^b}{1-2z^b} \sum_{j=1}^b H_j(z) \left(\sum_{S\subseteq \{j,j+1, \ldots, b-1\}} \prod_{k\in S} \frac{2z^k}{1-2z^k} \right) + H_b(z).
\end{equation*}
One can see that the common denominator, $\hat{g}(z)$ is of the form $\prod_{i=1}^b (1-z^i)(1-2z^i)$.  In order to simplify the calculation of the denominator $\hat{f}(1/2)$, we note the denominators of the summands in $H_b(z)$ do not contain the factor $(1-2z)$, so when finding a common denominator these corresponding summands in the numerator must contain $(1-2z)$ as a factor and hence go to zero when substituting $z=1/2$.  Further, in the sum of the subsets $S$ of the set $\{j,j+1, \ldots, b-1\}$, only the case where $j=1$ will have factors of $(1-2z)$ in the denominator and hence not in the corresponding factor in the function $\hat{f}(z)$.  Thus we need only consider the contribution of the following terms to the function $\hat{f}(z)$.
\begin{equation*}
\frac{2z^b}{1-2z^b} H_1(z) \left( \sum_{\stackrel{S \subseteq \{1, 2, \ldots, b-1\}}{s.t. 1\in S}}\prod_{k\in S} \frac{2z^k}{1-2z^k} \right) = \left(\frac{2z^b}{1-2z^b}\right) \left( \frac{z}{1-z} \right) \left(\frac{2z}{1-2z}\right) \left( \sum_{S \subseteq \{2, 3, \ldots, b-1\}}\prod_{k\in S} \frac{2z^{k}}{1-2z^{k}} \right)
\end{equation*}
Thus after finding the common denominator and summing, the contribution to $\hat{f}(z)$ is given by
\begin{equation*}
2z^b \cdot z \cdot 2z \cdot \prod_{k=2}^b (1-z^k) \left(\sum_{S \subseteq \{2, \ldots, b-1\}}\prod_{k\in S} 2z^{k} \cdot \prod_{k\notin S} (1-2z^k) \right).
\end{equation*}
Substituting in $z=1/2$, we have
\begin{eqnarray*}
&&2(1/2)^b \cdot (1/2) \cdot 2(1/2) \cdot \prod_{k=2}^b (1-(1/2)^k) \sum_{S \subseteq \{2, \ldots, b-1\}}\prod_{k\in S} (1/2)^{k-1} \cdot \prod_{k\notin S} (1-(1/2)^{k-1})\\
&=& (1/2)^b \cdot \prod_{k=2}^b \frac{2^k-1}{2^k} \sum_{S \subseteq \{1, \ldots, b-2\}} \prod_{k\in S} (1/2)^k \cdot \prod_{k\notin S} \frac{2^k-1}{2^k}\\
&=& (1/2)^b \cdot \prod_{k=2}^b \frac{2^k-1}{2^k}  \sum_{S \subseteq \{1, \ldots, b-2\}} \frac{\prod_{k\notin S} 2^k-1}{\prod_{k=1}^{b-2} 2^k}\\
&=& (1/2)^b \cdot \prod_{k=2}^b \frac{2^k-1}{2^k} \left(\frac{ \sum_{S \subseteq \{1, \ldots, b-2\}}\prod_{k\in S} 2^k-1}{2^{b-1\choose 2}}\right)\\
&=& (1/2)^b \cdot \prod_{k=2}^b \frac{2^k-1}{2^k} 
\end{eqnarray*}
The penultimate equality holds because summing over all subsets by elements not in a subset is equivalent to summing over all subsets with elements in the subset.  The last equality holds because a set, $A$, of cardinality $b-1\choose 2$ may be partitioned into disjoint sets $A_1, A_2, \ldots, A_{b-2}$ such that the cardinality of $A_k$ is $k$.  Subsets of $A$ may be counted directly by $2^{b-1\choose 2}$ or by taking the union of subsets, one each from the sets $A_1, A_2, \ldots, A_{b-2}$ where $k\in S$ indicates $2^k-1$ possible nonempty subsets to be contributed to the union from the set $A_k$, and $k\notin S$ indicates the empty set is contributed to the union from the set $A_k$.
\end{proof}

Now we are ready to state our theorem on the asymptotic behavior of $C_b(z)$.

\begin{theorem}\label{asymptotic_theorem}
The number of convex $(n,b)$-domino towers, given by the $n^{th}$ coefficient of $C_b(z)$, has an asymptotic approximation as follows:
\begin{equation*}
[z]^n C_b(z) \sim 2^{n-b+1} \prod_{k=1}^{b-1} \frac{2^k}{2^k-1} \cdot \left( 1 + \sum_{i=0}^{b-2} \frac{1}{ \prod_{k=i+1}^{b-1} (2^k -1)}\right)
\end{equation*}
\end{theorem}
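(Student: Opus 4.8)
The plan is to read off the coefficient $[z]^n C_b(z)$ from the dominant singularity of the rational function $C_b$. By the preceding proposition we already have $C_b(z)=f_b(z)/g_b(z)$ with
\[
g_b(z)=(1-z^b)(1-2z^b)\prod_{k=1}^{b-1}(1-z^k)^2(1-2z^k),
\]
and $z=1/2$ is the unique pole of smallest modulus and is simple. Hence, by the transfer result quoted just after Theorem~\ref{subex_theorem}, $[z]^n C_b(z)\sim\theta_b(n)2^n$ with $\theta_b(n)=(-2)f_b(1/2)/g_b'(1/2)$. The denominator contribution $g_b'(1/2)=(-2)\frac{2^b-1}{2^b}\prod_{k=1}^{b-1}\big(\frac{2^k-1}{2^k}\big)^3$ is supplied by Proposition~\ref{derivative_half_prop}, so the remaining work is to evaluate $f_b(1/2)$ and then to simplify the quotient.

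To obtain $f_b(1/2)$ I would use the product form $C_b=(G_b+1)(2R_b+H_b)$ of Theorem~\ref{convex_theorem}. Write $G_b(z)+1=\tilde f(z)/\tilde g(z)$ with common denominator $\tilde g(z)=\prod_{k=1}^{b-1}(1-z^k)$ read off from Proposition~\ref{prop_supporting_genfunc}, and $2R_b(z)+H_b(z)=\hat f(z)/\hat g(z)$ with $\hat g(z)=\prod_{i=1}^{b}(1-z^i)(1-2z^i)$ as identified in the proof of Proposition~\ref{numerator_left_prop}. A short check gives $\tilde g(z)\hat g(z)=g_b(z)$, so we may take $f_b=\tilde f\hat f$; since Propositions~\ref{prop_supporting_genfunc} and~\ref{numerator_left_prop} make $\tilde f(1/2)$ and $\hat f(1/2)$ manifestly positive, the numerator does not vanish at $z=1/2$ (confirming the pole is genuinely simple) and $f_b(1/2)=\tilde f(1/2)\hat f(1/2)$. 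Proposition~\ref{numerator_left_prop} gives $\hat f(1/2)=(1/2)^b\prod_{k=2}^{b}\frac{2^k-1}{2^k}$. For the other factor, $\tilde f(1/2)=(G_b(1/2)+1)\,\tilde g(1/2)$ with $\tilde g(1/2)=\prod_{k=1}^{b-1}\frac{2^k-1}{2^k}$, and substituting $x=1/2$ into Proposition~\ref{prop_supporting_genfunc} yields $G_b(1/2)=\sum_{i=1}^{b-1}\prod_{j=1}^{i}\frac{1}{2^{b-j}-1}$; the reindexing $m=b-i$ rewrites this as $\sum_{i=0}^{b-2}\frac{1}{\prod_{k=i+1}^{b-1}(2^k-1)}$, which is precisely the inner sum appearing in the statement.

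Finally I would assemble the pieces. Substituting $f_b(1/2)=\tilde f(1/2)\hat f(1/2)$ and the formula for $g_b'(1/2)$ into $\theta_b(n)=(-2)f_b(1/2)/g_b'(1/2)$, the overall sign and the factor $\frac{2^b-1}{2^b}$ cancel; the product $\prod_{k=1}^{b-1}\frac{2^k-1}{2^k}$ occurs to the second power in the numerator (once from $\tilde g(1/2)$ and once from $\hat f(1/2)$, after rewriting $\prod_{k=2}^{b}\frac{2^k-1}{2^k}=2\,\frac{2^b-1}{2^b}\prod_{k=1}^{b-1}\frac{2^k-1}{2^k}$) and to the third power in $g_b'(1/2)$, leaving a single factor $\prod_{k=1}^{b-1}\frac{2^k}{2^k-1}$; collecting the surviving powers of $2$ produces $2^{-b+1}$, so that
\[
\theta_b(n)=2^{-b+1}\prod_{k=1}^{b-1}\frac{2^k}{2^k-1}\left(1+\sum_{i=0}^{b-2}\frac{1}{\prod_{k=i+1}^{b-1}(2^k-1)}\right),
\]
and multiplying by $2^n$ gives the claimed approximation. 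The case $b=1$ is recovered from the usual conventions that empty products equal $1$ and empty sums equal $0$ (yielding $2^n$, consistent with $c_1(n)=2^n-1$). The only genuinely delicate step is this last bookkeeping: keeping track of every copy of $\frac{2^k-1}{2^k}$ across $\tilde f(1/2)$, $\hat f(1/2)$, and $g_b'(1/2)$, and verifying that the reindexed $G_b(1/2)$ matches the stated sum term-by-term. Every evaluation at $z=1/2$ has already been carried out in the earlier propositions, so no new analytic input is needed — the proof is purely a matter of combining and simplifying.
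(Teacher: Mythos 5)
Your proposal is correct and follows essentially the same route as the paper: express $\theta_b(n)=(-2)f_b(1/2)/g_b'(1/2)$, split $f_b=\tilde f\hat f$ along the factorization $C_b=(G_b+1)(2R_b+H_b)$, and invoke Propositions~\ref{derivative_half_prop} and~\ref{numerator_left_prop} before simplifying. The only (harmless) variation is that you evaluate the first factor as $(G_b(1/2)+1)\tilde g(1/2)$ with a reindexing of $G_b(1/2)$, whereas the paper expands the numerator $\bar f_b(z)$ term by term and converts to the form $1+\sum_{i=0}^{b-2}\bigl(\prod_{k=i+1}^{b-1}(2^k-1)\bigr)^{-1}$ only at the end; the bookkeeping of the powers of $\frac{2^k-1}{2^k}$ and of $2$ checks out.
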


\begin{proof}
We will now need to combine our result of Proposition~\ref{numerator_left_prop} with the factor $(1+G_b(z))$ of $C_b(z)$.  We have 
\begin{equation*}
1+G_b(z) = 1+ \frac{z^{b-1}}{1-z^{b-1}} +\left(\frac{z^{b-1}}{1-z^{b-1}}\right) \left(\frac{z^{b-2}}{1-z^{b-2}}\right)+ \cdots + \left(\frac{z^{b-1}}{1-z^{b-1}}\right) \left(\frac{z^{b-2}}{1-z^{b-2}}\right)\cdots \left(\frac{z}{1-z}\right) 
\end{equation*}
Let $\bar{f}_b(z)/\bar{g}_b(z)$ denote the rational function $(1+G_b(x))$.  The common denominator of this function is $\bar{g}_b(z)=\prod_{i=1}^{b-1} (1-z^i)$.  Thus the numerator is 
\begin{equation*}
\bar{f}_b(z)=(1-z^{b-1})(1-z^{b-2})\cdots (1-z) + z^{b-1} (1-z^{b-2})\cdots (1-z) + \cdots + (z^{b-1})(z^{b-2})\cdots (z)
\end{equation*}
and
\begin{eqnarray*}
\bar{f}_b\left(1/2\right) &=& \left(1-1/2^{b-1}\right)\left(1-1/2^{b-2}\right) \cdots \left(1-1/2\right) + \left(1/2\right)^{b-1} \left(1-1/2^{b-2}\right)\cdots \left(1-1/2\right) +\\
&& \cdots + \left(1/2^{b-1}\right)\left(1/2^{b-2}\right)\cdots (1/2)\\
&=& \left(\frac{2^{b-1}-1}{2^{b-1}}\right) \left(\frac{2^{b-2}-1}{2^{b-2}}\right) \cdots \left(\frac{1}{2}\right) +\left(\frac{1}{2}\right)^{b-1} \left(\frac{2^{b-2}-1}{2^{b-2}}\right) \cdots \left( 1-\frac{1}{2} \right)+ \cdots\\
&&  + \left(\frac{1}{2}^{b-1}\right)\left(\frac{1}{2}^{b-2}\right)\cdots \frac{1}{2}\\
&=& \frac{\left(\sum_{i=1}^{b-1} \prod_{k=1}^{i} (2^{k}-1)\right) +1}{\prod_{k=1}^{b-1} 2^k} \\
&=& \frac{\sum_{i=0}^{b-1} \prod_{k=1}^{i} (2^{k}-1)}{\prod_{k=1}^{b-1} 2^k}
\end{eqnarray*}
Thus 
\begin{equation*}
f(1/2)=\hat{f}(1/2) \cdot \bar{f}(1/2) =   \frac{(1/2)^b \cdot \prod_{k=2}^b \frac{2^k-1}{2^k}  \cdot \left(\sum_{i=0}^{b-1} \prod_{k=1}^{i} (2^{k}-1)\right)}{\prod_{k=1}^{b-1} 2^k} 
\end{equation*}
and we have 
\begin{eqnarray*}
\theta_b(n)&=&\frac{(-2) f(1/2)}{g^{(1)}(1/2)}\\
&=&\frac{(-2) (1/2)^b \cdot \prod_{k=2}^b \frac{2^k-1}{2^k}  \cdot \left(\sum_{i=0}^{b-1} \prod_{k=1}^{i} (2^{k}-1)\right)}{ (-2)\left(\frac{2^b-1}{2^b}\right)\left( \prod_{k=1}^{b-1} \left(\frac{2^k -1}{2^k}\right)^3 \right)\prod_{k=1}^{b-1} 2^k }\\
&=&\frac{ (1/2)^{b-1}  \cdot \left(\sum_{i=0}^{b-1} \prod_{k=1}^{i} (2^{k}-1)\right)} {\left( \prod_{k=1}^{b-1} \left(\frac{2^k -1}{2^k}\right)^2 \right)\prod_{k=1}^{b-1} 2^k }\\
&=&\frac{ (1/2)^{b-1}  \cdot \left(\sum_{i=0}^{b-1} \prod_{k=1}^{i} (2^{k}-1)\right)}{  \prod_{k=1}^{b-1} \frac{2^k -1}{2^k} \cdot \prod_{k=1}^{b-1} 2^k-1 }\\
&=& (1/2)^{b-1} \prod_{k=1}^{b-1} \frac{2^k}{2^k-1} \cdot \frac{\left(\sum_{i=0}^{b-1} \prod_{k=1}^{i} (2^{k}-1)\right)}{ \prod_{k=1}^{b-1} 2^k-1}\\
&=& (1/2)^{b-1} \prod_{k=1}^{b-1} \frac{2^k}{2^k-1} \cdot \left( 1 + \sum_{i=0}^{b-2} \frac{1}{ \prod_{k=i+1}^{b-1} (2^k -1)}\right)\\
\end{eqnarray*}
\end{proof}
Thus Theorem~\ref{subex_theorem} is a consequence of Theorem~\ref{asymptotic_theorem}.  The factor $\left( 1 + \sum_{i=0}^{b-2} \frac{1}{ \prod_{k=i+1}^{b-1} (2^k -1)}\right)$ approaches 1 as $b>>0$, and the factor $\prod_{k=1}^{b-1} \frac{2^k}{2^k-1}$ is monotonic sequence bounded above whose decimal expansion is given by the sequence A065446~\cite{OEIS} and can be approximated by 3.46.  Rounded and estimated values of $\theta_b(n)$ are given in Table~\ref{asymptotic_estimates_fig}.
\begin{table}
\[
\begin{array}{l|lllllllll}
b &2&3&4&5&6&7&8&9&10\\
\hline
 \theta_b(n) &2&1.11111&0.47166&0.21994&0.10853&0.05414&0.02706&0.01353&0.00676\\
3.46 (1/2)^{b-1}&1.73&0.86500&0.43250&0.21675&0.10813&0.05406&0.02703&0.01352&0.00676\\
\hbox{Error} & 0.27 &0.24611&0.03916&0.00319& 0.00040 &0.00008&0.00003&0.00001&0.000005 \\
\end{array}
\]
\caption{Table of rounded and estimated values of $\theta_b(n)$}
\label{asymptotic_estimates_fig}
\end{table}

\section{Conclusions}\label{conclusion}
We conclude with some questions and comments.
\begin{enumerate}
\item Linear recurrences on supporting domino towers, domino stacks, and right- or left-skewed domino towers have been given.  Can a direct linear recurrence on $c_b(n)$, the number of convex domino towers, be found?
\item If each domino block has uniform mass, a natural weight function on a domino tower can be defined by the location of the center of mass.  Which properties of domino towers have nice enumerative results?  For example, what are the enumerative results on weight zero towers or balanced towers, that is, towers whose center of mass is over the base?
\item Can column-convex or row-convex domino towers be counted?  What about domino towers without any ``holes"?
\item In chemistry, molecules are not built with uniform size, so it may be of interest to study towers built with horizontal pieces of mixed lengths, for example, towers utilizing horizontal pieces of lengths from a set $S \subset \mathbb{N}_{+}$.
\item The domino towers studied here are equivalent to two-dimensional polyominoes.  If we allow blocks with greater width or allow different orientations in three-dimensional space, the counting becomes much more difficult.  See Durhuss and Eilers~\cite{Durhuss_Eilers} for an exploration of towers of $2\times 4$ LEGO\textsuperscript{\textregistered} blocks. 
\end{enumerate}

\noindent \textbf{References}

%
%
%
%

\end{document}